\numberwithin{equation}{section}
\numberwithin{figure}{section}
\numberwithin{table}{section}
\DeclareRobustCommand*\cal{\@fontswitch\relax\mathcal}
\crefname{section}{Section}{Sections}
\Crefname{section}{Section}{Sections}
\Crefname{construction}{Construction}{Constructions}
\Crefname{construction}{Construction}{Constructions}
\newcommand{\doi}[1]{\textsc{doi}: \href{http://dx.doi.org/#1}{\nolinkurl{#1}}}
\setlist[enumerate, 1]{label = (\roman*), font = \upshape}
\setlist[itemize, 2]{label = {$\circ$}}
\theoremstyle{plain}
\newtheorem{theorem}{Theorem}[section]
\newtheorem{corollary}[theorem]{Corollary}
\newtheorem{lemma}[theorem]{Lemma}
\newtheorem{proposition}[theorem]{Proposition}
\theoremstyle{definition}
\theoremstyle{remark}
\newtheorem{remark}[theorem]{Remark}
\newtheorem*{acknowledgment}{Acknowledgment}
\newtheorem*{notation}{Notation}
\DeclareMathOperator{\Z}{\mathbb{Z}}                            
\DeclareMathOperator{\Q}{\mathbb{Q}}                            
\DeclareMathOperator{\ZVal}{\mathbb{Z}\mathrm{Val}}             
\DeclareMathOperator{\RePart}{\mathrm{Re}}                      
\DeclareMathOperator{\gt}{>}                                    
\DeclareMathOperator{\lt}{<}                                    
\renewcommand{\mid}{:\ }                                        
\def\pFq#1#2#3#4#5{\begingroup%
    {}_{#1}F_{#2}\biggl[\begin{matrix}#3\\#4\end{matrix}\biggm.;#5\biggr]
	\endgroup}
\begin{document}

\title{Classification of tight $2s$-designs with $s \geq 2$}

\author{Ziqing Xiang}
\address{Department of Mathematics and National Center For Applied Mathematics Shenzhen, Southern University of Science and Technology}
\email{xiangzq@sustech.edu.cn}

\date{}

\begin{abstract}
Tight $2 s$-designs are the $2 s$-$(v, k, \lambda)$ designs whose sizes achieve the Fisher type lower bound ${v \choose s}$. Symmetric $2$-designs, the Witt $4$-$(23, 7, 1)$ design and the Witt $4$-$(23, 16, 52)$ design are tight designs. It has been widely conjectured since 1970s that there are no other nontrivial tight designs. In this paper, we give a proof of this conjecture. In the proof, an upper bound $v \ll s$ is shown by analyzing the parameters of the designs and the coefficients of the Wilson polynomials, and a lower bound $v \gg s (\ln s)^2$ is shown by using estimates on prime gaps.
\end{abstract}

\maketitle

\section{Introduction}
\label{sec:subexternal}

For positive integers $t$, $v$, $k$ and $\lambda$, a {\em $t$-$(v, k, \lambda)$ design}, or simply a {\em $t$-design}, consists of a $v$-set $V$, whose elements are called {\em points}, and a collection $B$ of $k$-subsets of $V$, whose elements are called {\em blocks}. The defining property of a $t$-design is that every $t$-subset of $V$ is contained in exactly $\lambda$ blocks. A design is {\em nontrivial} if $B$ is not the whole collection of all $k$-subsets of $V$.

In 1977, Wilson \cite{Wilson1972a,Wilson1972,Wilson1975} constructed $2$-designs for all sufficiently large $v$ under some necessary divisibility conditions. In 1987, designs for all $t$ and some $v, k, \lambda$ were first constructed by Teirlinck \cite{Teirlinck1987}. In 2014, Keevash \cite{Keevash2014} developed the method of randomized algebraic construction, and in particular, constructed $t$-designs for all sufficiently large $v$ under the divisibility conditions.

The focus of this paper is on the sizes of designs. Fisher \cite{Fisher1940} proved in 1940 that, for nontrivial $2$-designs, the number of blocks is at least the number of points, namely, $|B| \geq v$. This inequality is known as the {\em Fisher's inequality}. In 1968, Petrenjuk \cite{Petrenyuk1968} generalized Fisher's inequality and proved that for nontrivial $4$-designs, $|B| \geq {v \choose 2}$. Moreover, Petrenjuk proposed a conjectural inequality $|B| \geq {v \choose s}$ for $2s$-designs. In 1975, Ray-Chaudhuri and Wilson \cite{RayChaudhuriWilson1975} proved Petrenjuk's conjecture. This lower bound is known as the {\em Fisher type lower bound}. Designs that achieve this lower bound are called {\em tight}.

Tight $2$-designs are also called {\em symmetric designs}. Typical examples are the $2$-$(n^2 + n + 1, n + 1, 1)$ designs arising from projective planes of order $n$, and the $2$-$(4 n - 1, 2 n - 1, n - 1)$ designs arising from Hadamard matrices of order $4 n$. For tight $4$-designs, there are the Witt $4$-$(23, 7, 1)$ design and its complement, the Witt $4$-$(23, 16, 52)$ design, are tight $4$-designs. No other nontrivial tight $2s$-designs with $s \geq 2$ have been found.

Due to a series work \cite{Delsarte1973,RayChaudhuriWilson1975,Peterson1977,Bannai1977,EnomotoItoNoda1979,Bremner1979} in 1970s by Delsarte, Ray-Chaudhuri, Wilson, Bannai, Peterson, Enomoto, Ito, Noda and Bremner, it was known that (1) the only nontrivial tight $4$-designs are the Witt designs; (2) there are no nontrivial tight $6$-designs; (3) there are only finitely many nontrivial tight $2s$-designs for each $s \geq 5$. Therefore, it was widely conjectured that symmetric designs and the Witt designs are the only nontrivial tight designs.

There was no further progress on this conjecture until recently. In 2013, Dukes and Short-Gershman \cite{DukesShortGershman2013} proved the conjecture for $s \in \{5, 6, 7, 8, 9\}$ by extending Bannai's approach \cite{Bannai1977}. In 2018, the author \cite{Xiang2018} used a different approach to prove the $s = 4$ case.

In this paper, we have completely proved the conjecture by using new strategies.

\begin{theorem}[Main result] \label{thm:myxocyte}
The only nontrivial tight $2 s$-designs with $s \geq 2$ are the Witt $4$-$(23, 7, 1)$ design and the Witt $4$-$(23, 16, 52)$ design.
\end{theorem}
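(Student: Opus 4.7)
The plan is to combine a parametric upper bound $v = O(s)$ with an arithmetic lower bound $v = \Omega(s(\ln s)^2)$, producing a contradiction for all sufficiently large $s$, and then to dispatch the finitely many remaining cases by computation together with the classical classifications of Enomoto--Ito--Noda and Bremner for $s \in \{2,3\}$, Peterson for $s = 4$, and the known non-existence for $s \geq 5$ that is already effective after a uniform bound is in hand.

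For the upper bound, I would work with the intersection numbers of a tight $2s$-design. Delsarte's theory together with Ray-Chaudhuri--Wilson identifies the $s+1$ distinct block intersection numbers (or rather the shifted variables $x = k - |B_1 \cap B_2|$) as the roots of an explicit Wilson polynomial $W_s(x; v, k, \lambda)$ of degree $s+1$ whose coefficients are polynomials in $v, k, \lambda$. Tightness forces every root of this polynomial to be a nonnegative integer in a narrow range determined by $k$. The strategy is to expand $W_s$ in a convenient basis, isolate the leading-order behaviour of its coefficients as a function of $v$ (with $s$ treated as a parameter), and show that if $v$ exceeds some explicit $C \cdot s$, then the integrality and spacing constraints on the $s+1$ roots cannot all be satisfied simultaneously; in particular, the coefficients governing the elementary symmetric functions of the roots grow too fast relative to the admissible root interval. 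This is where the bulk of the combinatorial-analytic work lives, and it is the main obstacle: one must make the bound genuinely linear in $s$, not $s^2$ or worse, because the opposing lower bound only beats a linear function by a $(\ln s)^2$ factor.

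For the lower bound, I would exploit the standard divisibility conditions $\binom{k-i}{t-i} \mid \lambda \binom{v-i}{t-i}$ for $0 \leq i \leq t = 2s$, together with the sharper integrality of the dual block intersection numbers and of the $\lambda_i$'s of all derived designs. The idea is that each prime $p$ in a suitable range must divide at least one of the factorials appearing in $k(k-1)\cdots(k-2s+1)$ to compensate for the corresponding factor in $v(v-1)\cdots(v-2s+1)$, and if $v$ is too small compared to $k$ (equivalently too small compared to $s$) then some prime in an interval $(k - O(s), k]$ fails to have a partner in $(v - O(s), v]$. Quantifying "too small" requires upper bounds on prime gaps in intervals of length $\Theta(s)$ near $v$; invoking e.g.\ the Baker--Harman--Pintz type bounds, or even elementary Chebyshev and Bertrand-style estimates iterated $O(\ln s)$ times, yields the lower bound $v \geq c s (\ln s)^2$.

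Juxtaposing the two bounds, we obtain $c s (\ln s)^2 \leq v \leq C s$, which fails once $(\ln s)^2 > C/c$, i.e.\ for all $s$ beyond an explicit threshold $s_0$. For $s \leq s_0$, the upper bound still gives a finite list of candidate triples $(v, k, \lambda)$ satisfying the Wilson-polynomial integrality; for each such triple I would verify by direct computation, or by appealing to the cited works \cite{EnomotoItoNoda1979,Bremner1979,Xiang2018,DukesShortGershman2013}, that no nontrivial tight design exists except the two Witt designs. The hardest step is certainly the linear-in-$s$ upper bound: controlling the $s+1$ Wilson polynomial coefficients tightly enough to rule out large $v$ demands a careful choice of basis and a genuine structural argument, since crude valuation estimates give only $v = O(s^2)$ and leave the two sides of the squeeze non-intersecting.
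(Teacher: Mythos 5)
Your high-level plan — squeeze $v$ between an upper bound that is linear in $s$ and a lower bound of order $s(\ln s)^2$, then dispatch the finite remainder by computation and the existing classifications for small $s$ — is exactly the paper's framework. But the proposal does not contain the proof of the crucial half of the squeeze, and the gap is not a technical detail: it is the central new idea of the paper.

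For the upper bound, you propose to analyze the roots of the Wilson polynomial and the growth of its coefficients, acknowledging that "this is where the bulk of the combinatorial-analytic work lives" and that crude estimates give only $v = O(s^2)$. That acknowledgment is where the proposal stops. In fact, the root-based approach (Bannai's) is believed to give only $v \ll \exp(s)$, and a naive coefficient-based valuation argument gives $v \ll s^5$, not $s^2$. The paper's linear bound $v \ll s$ comes from a genuinely different device: one introduces the functions $h_{s,i} := (k-s)^{\overline{i+1}}/(v-2s+1)^{\overline{i}}$, shows (via nontrivial identities relating them to the $\lambda_{s,i}$ and the Wilson coefficients $\alpha_{s,i}$) that $s!\,\ell_{s-1}\ell_{s-2}\,h_{s,i}$ is integer-valued on design parameters, and then forms the alternating quadratic combination $H_{s,r} = \sum_i (-1)^i \binom{r}{i} h_{s,s-r+i} h_{s,s-i}$. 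The miracle is that $H_{s,r}$ collapses, via Dixon's hypergeometric identity, to a single closed-form product $G_{s,r}$ that decays like $v^{-\lfloor s/2\rfloor+2}$. A further $p$-adic twist, stripping out small primes from the denominator bound, is then needed to push $v \ll s^5$ down to $v \ll s$. None of this — the $h_{s,i}$, the alternating quadratic sum, Dixon's identity, or the $p$-adic refinement — appears in your proposal, and without it the upper and lower bounds never cross.

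For the lower bound your instinct is correct (no primes can occur in the denominator of $s!\lambda$, forcing a long prime-free interval near $v$), but the paper's argument is cleaner and you should be aware that the tools you name are not strong enough. The quantity $s!\lambda_{s,s} = k^{\underline{2s}}/(v-s)^{\underline{s}}$ is an integer with $v - 2s + 1 > k$, so the $s$ consecutive integers $v-2s+1,\dots,v-s$ are all composite; hence $v - 2s \geq \rho_{s+1}$ where $\rho_m$ is the first start of a prime-free run of length $m-1$. To turn this into $v \gg s(\ln s)^2$ you need an explicit prime-gap theorem of the form ``there is a prime in $(x, x + x/(c(\ln x)^2)]$ for $x$ large'' (Dusart), or at least a PNT-strength statement; Bertrand's postulate and Chebyshev bounds, even iterated, give gaps only $O(x)$ and would yield $v \gg s$, which is useless against a matching $v \ll s$ upper bound. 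Baker--Harman--Pintz would of course suffice, but an explicit constant-bearing version (Dusart's) is what makes the final finite case check tractable.

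So: right skeleton, right lower-bound mechanism modulo the choice of prime-gap input, but the upper bound is asserted rather than proved, and the missing content is precisely the paper's main contribution.
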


In coding theory, the perfect $e$-code is the dual concept to the tight $2s$-design. Tiet{\"a}v{\"a}inen \cite{Tietaevaeinen1972} proved in 1972 that the only nontrivial perfect $e$-codes with $e \geq 2$ on Hamming association schemes over finite fields are the $[23, 12, 7]_2$ and $[11, 6, 5]_3$ Golay codes. The classification of perfect $e$-codes on Johnson association schemes, where $t$-$(v, k, \lambda)$ designs reside, is still open. \cref{thm:myxocyte} can be viewed as the design theory counterpart of the classfication of such perfect codes.

\subsection{Intersection numbers and the Wilson polynomials}

An important tool in the study of block designs is the {\em intersection numbers} of the designs. These are the sizes of the intersections of different blocks in a design.

For a tight $2 s$-$(v, k, \lambda)$ design, its intersection numbers are completely determined by $s$, $v$ and $k$. More precisely, combining the results of Delsarte \cite{Delsarte1973} and those of Ray-Chaudhuri and Wilson \cite{RayChaudhuriWilson1975}, for a fixed $s$, there exists a degree $s$ polynomial $\Phi_s(v, k; z) \in \Q(v, k)[z]$ in $z$ with coefficients in $\Q(v, k)$, such that the zeros of $\Phi_s(v, k; z)$ are exactly the intersection numbers of a nontrivial tight $2 s$-$(v, k, \lambda)$ design. These polynomials $\Phi_s$ are called the {\em Wilson polynomials}. They are the design theory counterpart of the Lloyd polynomials for perfect codes.

Since the intersection numbers are all integers, so are the zeros of the Wilson polynomials. This gives a necessary condition for the existence of nontrivial tight designs. Most previous work has focused on the behavior of the zeros of the Wilson polynomials.

When $s = 2$ or $s = 3$, the zeros of $\Phi_s$ can be computed. In 1979, Enomoto-Ito-Noda \cite{EnomotoItoNoda1979} analyzed the zeros of $\Phi_2$, and proved that the nontrivial tight $4$-designs give integer points on some explicit elliptic curve. Soon after this result, Bremner \cite{Bremner1979} and Stroeker \cite{Stroeker1981} independently determined all integer points on this elliptic curve, thus proving \cref{thm:myxocyte} for $s = 2$. In 1977, Peterson \cite{Peterson1977} analyzed the zeros of $\Phi_3$, and proved \cref{thm:myxocyte} for $s = 3$.

When $s \geq 5$, in 1977, Bannai \cite{Bannai1977} obtained the asymptotic behavior of the zeros of $\Phi_s$ as $v \to \infty$, and showed that they are controlled by the zeros of the Hermite polynomials. Using the properties of the Hermite polynomials, he proved that for each fixed $s \geq 5$, there are only finitely many nontrivial tight $2 s$-designs. In 2013, Dukes and Short-Gershman \cite{DukesShortGershman2013} applied Bannai's approach explicitly and proved \cref{thm:myxocyte} for $s \in \{5, 6, 7, 8, 9\}$.

Unfortunately, Bannai's method does not seem to be able to prove \cref{thm:myxocyte}. It does not apply for the $s = 4$ case since the zeros of the degree $4$ Hermite polynomial do not satisfy the needed properties. Moreover, for large $s$, some heuristics by the author suggest that an explicit version of Bannai's approach might only give a loose upper bound $v \ll \exp(s)$ as $s \to \infty$. \cref{rem:osteomere} elaborates on why this upper bound is not small enough to prove \cref{thm:myxocyte}.

In 2018, the author \cite{Xiang2018} studied the coefficients of the Wilson polynomials, instead of the zeros of the Wilson polynomials, and thus obtained some different necessary conditions for the existence of nontrivial tight designs. By combining these conditions and results on $\Phi_4$ in \cite{Bannai1977,DukesShortGershman2013}, the author \cite{Xiang2018} proved \cref{thm:myxocyte} for $s = 4$. However, the results we need on $\Phi_4$ are unique to the $s = 4$ case, and new ideas are needed to prove \cref{thm:myxocyte}.

\subsection{Strategy and organization}

The proof of the main result consists of two parts: showing a good upper bound for $v$, and showing a good lower bound for $v$. In both parts, we use some new strategies.

The main idea of proving an upper bound is to construct an auxiliary function $H$ in $v$ and $k$ for each fixed $s$ such that the following three properties hold.
\begin{enumerate}
	\item \label{itm:synema} The function $H$ takes rational values with bounded denominators at the parameters of nontrivial tight $2 s$-designs.
	\item \label{itm:Cerdonian} As $v, k \to \infty$ with $v \geq k$, $H = o(1)$.
	\item \label{itm:pectoriloquism} $H \gt 0$.
\end{enumerate}
Since rational numbers with fixed bounded denominators are discrete, \ref{itm:synema}, \ref{itm:Cerdonian} and \ref{itm:pectoriloquism} conflict with each other when $v$ is sufficiently large compared with $s$, which leads to an upper bound for $v$ in terms of $s$.

Such an $H$ is constructed in \cref{sec:biliously,sec:carouser}. There are two families of basic functions $\lambda_i$ and $\alpha_i$ with $\lambda_i = O(v^s)$ and $\alpha_i = O(v^i)$. We first construct certain rational linear combinations $h_i = O(v)$ of $\lambda_i$'s and $\alpha_i$'s in \cref{sec:biliously}. We then construct a certain rational quadratic combination $H$ of $h_i$'s in \cref{sec:carouser}. This $H$ satisfies \ref{itm:synema} because $\lambda_i$'s and $\alpha_i$'s do. Moreover, this $H$ happens to have a simple closed-form formula, which allows us to prove $H = O(v^{- \lfloor s / 2 \rfloor + 2})$. Therefore, \ref{itm:Cerdonian} holds for $s \geq 6$. The property \ref{itm:pectoriloquism} also follows from the closed-form formula of $H$.

In \cref{sec:ineffaceability}, we give an upper bound for the denominators of $H(v_0, k_0)$ for parameters $(v_0, k_0)$ of nontrivial tight $2 s$-designs. This gives an upper bound $v \ll s$ as $s \to \infty$. The result in \cref{sec:ineffaceability} relaxes the bounded denominator condition in \ref{itm:synema} a little bit, since otherwise we could only prove $v \ll s^5$.

On the other hand, we analyze the parameter $\lambda$ to get a lower bound for $v$. The denominator of $s! \lambda$ is a product of consecutive integers, which cannot contain any primes. Using known estimates of prime gaps, we prove $v \gg s (\ln s)^2$ as $s \to \infty$ in \cref{sec:gaffer}.

These two bounds prove \cref{thm:myxocyte} for sufficiently large $s$. To give a proof for all $s \geq 2$, we need to get explicit versions of the upper bound $v \ll s$ and the lower bound $v \gg s (\ln s)^2$, which is done in \cref{sec:ineffaceability} and \cref{sec:gaffer}, respectively. \cref{sec:malandrous} handles the small $s$ cases by giving explicit upper and lower bounds for $v$. It also contains the full proof of \cref{thm:myxocyte}.

\begin{notation}
    The set of all integers in a real interval $[a, b]$ is denoted by $[a, b]_{\Z}$. The {\em $n$-th rising factorial} and the {\em $n$-th falling factorial} of $x$ are denoted by $x^{\overline{n}} := \prod_{i = 0}^{n - 1}(x + i)$ and $x^{\underline{n}} := \prod_{i = 0}^{n - 1}(x - i)$, respectively. We adopt the convention that $x^{\overline{0}} = x^{\underline{0}} = x^0 = 1$. The {\em $p
    $-adic valuation} of a rational number $x$ is denoted by ${\rm val}_p(x)$.
\end{notation}

\begin{acknowledgment}
I would like to thank Pei Yu for inspiring me to find the proof of \cref{thm:myxocyte}. I also thank Eiichi Bannai, Akihiro Munemasa, Daniel Nakano, Chun-Ju Lai, Hajime Tanaka for useful discussions.
\end{acknowledgment}

\section{Bounded denominators of $h_{s, i}$'s}
\label{sec:biliously}

This section studies several families of functions in $v$ and $k$ that takes rational values with bounded denominators at the parameters of nontrivial tight designs.

Let $\ZVal[v, k]$ be the ring of integer-valued rational functions in $v$ and $k$, ${\cal D}_s$ the collection of all $(v_0, k_0)$'s such that there exists a nontrivial tight $2 s$-$(v_0, k_0, \lambda_0)$ design, and ${\cal A}_s$ the ring of integer-valued functions on ${\cal D}_s$. Then, $\Q \otimes_{\Z} {\cal A}_s$ is the ring of rational-valued functions on ${\cal D}_s$ with bounded denominators. It is clear that $\ZVal[v, k] \subseteq {\cal A}_s \subseteq \Q \otimes_{\Z} {\cal A}_s$.

For designs, there are three families of important functions. For each $i \in [0, s]_{\Z}$, let
\begin{align}
\label{eq:Bee}
\lambda_{s, i} := & \frac{1}{s!} \frac{k^{\underline{s + i}}}{(v - s)^{\underline{i}}} \in \Q(v, k), \\
\label{eq:Bass}
\alpha_{s, i} := & {s \choose i} \frac{(k - s)^{\overline{i}}(k - s + 1)^{\overline{i}}}{(v - 2 s + 1)^{\overline{i}}} \in \Q(v, k), \\
\label{eq:Sycamore}
h_{s, i} := & \frac{(k - s)^{\overline{i + 1}}}{(v - 2 s + 1)^{\overline{i}}} \in \Q(v, k).
\end{align}

It is well known that $\lambda_{s, i} \in {\cal A}_s$, since every tight $2 s$-$(v, k, \lambda)$ design is an $(s + i)$-$(v, k, \lambda_{s, i})$ design. It was proved in \cite[Corollary 2.2]{Xiang2018} that $\alpha_{s, i} \in {\cal A}_s$. More precisely, the zeros of the Wilson polynomial in $z$,
\[
	\Phi_s(z) := \sum_{i = 0}^s (-1)^{s - i} \frac{{v - s \choose i} {k - i \choose s - i} {k - i - 1 \choose s - i}}{{s \choose i}} {z \choose i} = \frac{{v - s \choose s}}{s!} \left(\sum_{i = 0}^s (-1)^i \alpha_{s, i} z^{\underline{s - i}}\right),
\]
are exactly the $s$ intersection numbers of tight $2 s$-designs by \cite{Delsarte1973,RayChaudhuriWilson1975,Bannai1977}. Therefore, $\alpha_{s, i}$ must take integer values at $(v_0, k_0) \in {\cal D}_s$.

The goal of this section is to prove $h_{s, i} \in \Q \otimes_{\Z} {\cal A}_s$. In other words, to prove that $h_{s, i}(v_0, k_0)$ has bounded denominator for $(v_0, k_0) \in {\cal D}_s$. The author proved such a result in \cite[Lemma 4.1]{Xiang2018} that the denominators of $h_{s, i}(v_0, k_0)$ are bounded above by $\exp(\frac{1}{2} s^2 \ln s + O(s^2))$. The main result of this section, \cref{prop:expostulating}, gives a much better estimate. It shows that the denominators are bounded above by $\exp(s \ln s + O(s))$.

Note that such an improvement is not necessary to prove \cref{thm:myxocyte} for sufficiently large $s$. However, for small $s$, this improvement can reduce the amount of computation needed in \cref{sec:malandrous} by a factor of $10^{50}$. So, it is an essential step in the proof of \cref{thm:myxocyte}.

\subsection{Linear combinations of $\alpha_{s, i}$ and $\lambda_{s, i}$}

For a natural number $n$, let
\[
	\ell_n := {\rm lcm} \left\{ {i \choose j} \mid 0 \leq j \leq i \leq n \right\}.
\]
The number $\ell_n$ has many alternative expressions, for example, $\ell_n = {\rm lcm} \{1, \dots, n\}$. Moreover, the statement $\ell_n = \exp(n + o(n))$ is equivalent to the prime number theorem. This is a technical number we need in the estimates in this section.

We first express $h_{s, s}$ in terms of $\lambda_{s, i}$'s and $\alpha_{s, i}$'s. The quotients $\frac{\lambda_{s, 1}}{h_{s, s}}, \dots, \frac{\lambda_{s, s}}{h_{s, s}}$ and $\frac{\alpha_{s, 1}}{h_{s, s}}, \dots, \frac{\alpha_{s, s}}{h_{s, s}}$ are all rational functions in $v$ and $k$. In \cref{lem:palatefulness}, we express $1$ as an explicit rational linear combination of these rational functions, from which we conclude that $h_{s, s}$ is a rational linear combination of $h_{s, 1}, \dots, h_{s, s}$ and $\alpha_{s, 1}, \dots, \alpha_{s, s}$.

\begin{lemma} \label{lem:palatefulness}
Let $s$ be a natural number with $s \geq 2$. Then, $s! \ell_{s - 2} h_{s, s}$ is a $\ZVal[v, k]$-linear combination of $\alpha_{s, 1}, \dots, \alpha_{s, s}$ and $\lambda_{s, 1}, \dots, \lambda_{s, s}$.
\end{lemma}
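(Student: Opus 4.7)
The plan is to construct explicit $\ZVal[v, k]$-linear combinations by combining direct calculation with Vandermonde's convolution. A straightforward computation from the definitions yields, for every $i \in [1, s]_{\Z}$, the two polynomial identities
\[
    s! \lambda_{s, i} = (k - s - 1)^{\underline{i - 1}} (v - 2s + 1)^{\overline{s - i}} \, h_{s, s}
\]
and
\[
    (k - s + i + 1)^{\overline{s - i}} \, \alpha_{s, i} = \binom{s}{i} (k - s + 1)^{\overline{i - 1}} (v - s)^{\underline{s - i}} \, h_{s, s}.
\]
These already express many polynomial multiples of $h_{s, s}$ as $\ZVal[v, k]$-linear combinations of the $\alpha_{s, i}$'s and $\lambda_{s, i}$'s. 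Writing $P_i := (k - s + 1)^{\overline{i - 1}}(v - s)^{\underline{s - i}}$ and $Q_i := (k - s - 1)^{\underline{i - 1}}(v - 2s + 1)^{\overline{s - i}}$, the task reduces to realising the constant $s! \ell_{s - 2}$ as a $\ZVal[v, k]$-combination of appropriate polynomial multiples of the $P_i, Q_i$ that arise in this way.

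Next, I would apply Vandermonde's convolution $\sum_j \binom{n}{j} x^{\underline{j}} y^{\underline{n - j}} = (x + y)^{\underline{n}}$ (after converting the rising factorials to falling factorials via $z^{\overline{m}} = (-1)^m (-z)^{\underline{m}}$) to obtain the closed forms
\[
    \sum_{i = 1}^s \binom{s - 1}{i - 1} (-1)^{i - 1} Q_i = (v - k)^{\underline{s - 1}}, \qquad
    \sum_{i = 1}^s \binom{s - 1}{i - 1} (-1)^{i - 1} P_i = (v - k - 1)^{\underline{s - 1}}.
\]
Multiplying through by $h_{s, s}$ and exploiting the identity $\binom{s - 1}{i - 1} / \binom{s}{i} = i/s$ (to clear the $\binom{s}{i}$ denominator on the $\alpha$-side), these translate into explicit $\ZVal[v, k]$-linear combinations giving $(v - k)^{\underline{s - 1}} h_{s, s}$ (as $s!$ times an integer combination of $\lambda_{s, i}$'s) and $s \, (v - k - 1)^{\underline{s - 1}} h_{s, s}$ (as an integer-valued polynomial combination of the $\alpha_{s, i}$'s).

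The remaining step is to reduce the polynomial factor in $v - k$ down to a constant. The discrete derivative identity $(v - k)^{\underline{s - 1}} - (v - k - 1)^{\underline{s - 1}} = (s - 1)(v - k - 1)^{\underline{s - 2}}$ lowers the degree by one; by iteratively combining this with further identities from individual $\alpha_{s, i}$ and $\lambda_{s, i}$ relations (which supply polynomial-in-$(v, k)$ multiples of $h_{s, s}$ of various degrees in $v - k$), one reduces the polynomial multiplier of $h_{s, s}$ step-by-step down to a constant. The two integer factors on the LHS emerge naturally from this process: $s!$ comes from the denominators on the $\lambda$-side, and $\ell_{s - 2} = \operatorname{lcm}(1, 2, \dots, s - 2)$ appears as the LCM of the integer coefficients $1, 2, \dots, s - 2$ introduced during the iterated finite differences.

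The hardest part will be verifying the sharp denominator bound $s! \ell_{s - 2}$: the naive iterative construction above a priori only yields some larger integer multiplier, and pinning it down to exactly $s! \ell_{s - 2}$ requires careful tracking of the $p$-adic valuations of every coefficient introduced during the Bezout-style combinations. This bookkeeping is the technical core of the argument, and is precisely the reason the statement involves $\ell_{s - 2}$ rather than (for instance) a larger product of binomial coefficients.
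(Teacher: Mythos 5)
Your two starting identities expressing $s!\lambda_{s,i}$ and $(k-s+i+1)^{\overline{s-i}}\alpha_{s,i}$ as polynomial multiples of $h_{s,s}$ are correct, and the Vandermonde evaluations $\sum_i \binom{s-1}{i-1}(-1)^{i-1}Q_i = (v-k)^{\underline{s-1}}$ and $\sum_i \binom{s-1}{i-1}(-1)^{i-1}P_i = (v-k-1)^{\underline{s-1}}$ check out (and are a genuinely nice observation that the paper does not use). But the proposal breaks down at the degree-reduction step, and the problem is not, as you suggest, merely the ``careful tracking of $p$-adic valuations'' needed to hit $s!\ell_{s-2}$ exactly; the iteration itself is unavailable.

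After one finite difference you have $s(s-1)(v-k-1)^{\underline{s-2}}\,h_{s,s}$ as a $\ZVal[v,k]$-combination. To iterate you need a \emph{second} degree-$(s-2)$ polynomial in the single variable $v-k$ realised as a combination of the $\lambda$'s and $\alpha$'s, and nothing in your construction supplies one. The Vandermonde convolution is rigid: the factorial lengths $i-1$ and $s-i$ appearing in $Q_i$ and $P_i$ are forced to sum to $s-1$, so it yields exactly one $Q$-side expression $(v-k)^{\underline{s-1}}$ and one $P$-side expression $(v-k-1)^{\underline{s-1}}$, both of the top degree. The individual relations give you $Q_i h_{s,s}$ and $\binom{s}{i}P_i h_{s,s}$, but these are polynomials in $v$ and $k$ separately, not functions of $v-k$, so they do not feed the finite-difference machinery. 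And no combination of the two single-variable polynomials you do have can reach a nonzero constant: since $(v-k)^{\underline{s-1}} = (v-k)\cdot(v-k-1)^{\underline{s-2}}$ and $(v-k-1)^{\underline{s-1}} = (v-k-1)^{\underline{s-2}}\cdot(v-k-s+1)$, every $\ZVal[v,k]$-combination of them (and of their differences) remains divisible by the polynomial $(v-k-1)^{\underline{s-2}}$.

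Escaping this requires working genuinely in two variables, which is what the paper's proof does. It exhibits a single explicit polynomial identity (its equation \cref{eq:logopedia}) that writes the constant $1$ directly as a combination of the $Q$-type and $P$-type factors with bivariate coefficients, and it verifies this identity not by iterated differences but by a degree count (degree $\le s-1$ in $v$, degree $\le s-2$ in $k$) plus evaluation on the $s\times(s-1)$ grid $\{s,\dots,2s-1\}\times\{1,\dots,s-1\}$, i.e.\ Lagrange interpolation. The factor $\ell_{s-2}$ then comes from clearing the explicit binomial denominators $\binom{s-j-1}{i-j}$, whose arguments never exceed $s-2$ — not from the integers $1,\dots,s-2$ accumulating through finite differences. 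In short, you have derived correct auxiliary facts, but the heart of the lemma is the two-variable interpolation identity, and your single-variable reduction plan cannot reproduce it.
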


\begin{proof}
As polynomials in $\Q[v, k]$, we have the identity
\begin{equation}
\label{eq:logopedia}
\begin{aligned}
1
& =	\sum_{i = 0}^{s - 1} \left( \sum_{j = i + 1}^{s - 1} \frac{ (-1)^{i + j} (j - i - 1)! }{ (s - i - 1)! (j - 1)! }  {k - j - 1 \choose s - j - 1} {k - i - 1 \choose j - i - 1} {v - s \choose i} \right) (v - 2 s + 1)^{\overline{s - i - 1}} (k - 1)^{\underline{i}} \\
& + \sum_{i = 0}^{s - 1} \left( \sum_{j = 1}^{i} \frac{ (-1)^{i + j} (i - j)! }{ i! (s - j - 1)! } {k - j - 1 \choose i - j} {k - 1 \choose j - 1} {v - s - i - 1 \choose s - i - 1} \right) (v - s)^{\underline{i}} (k - s + 1)^{\overline{s - i - 1}}.
\end{aligned}
\end{equation}
since the right side of \cref{eq:logopedia} is a polynomial in $v$ of degree $\leq s - 1$, a polynomial in $k$ of degree $\leq s - 2$, and a direct calculation shows that equality holds for $(v, k) \in \{s, \dots, 2 s - 1\} \times \{1, \dots, s - 1\}$.

By \cref{eq:Bee,eq:Bass,eq:Sycamore},
\begin{align}
	\label{eq:mangerite}
	s! \frac{\lambda_{s, i + 1}}{h_{s, s}} = & (v - 2 s + 1)^{\overline{s - i - 1}} (k - 1)^{\underline{i}}, \\
	\label{eq:unfarrowed}
	\frac{ i! i! (s - i)! }{ s! } {k \choose i} \frac{\alpha_{s, s - i}}{h_{s, s}} = & (v - s)^{\underline{i}} (k - s + 1)^{\overline{s - i - 1}}.
\end{align}
Substituting \cref{eq:unfarrowed,eq:mangerite} into \cref{eq:logopedia} gives
\begin{align*}
s! \ell_{s - 2}
& =	\sum_{i = 0}^{s - 1} \left( \sum_{j = i + 1}^{s - 1} \frac{ (-1)^{i + j} s! s! (j - i - 1)! \ell_{s - 2}}{ (s - i - 1)! (j - 1)! }  {k - j - 1 \choose s - j - 1} {k - i - 1 \choose j - i - 1} {v - s \choose i} \right) \frac{\lambda_{s, i + 1}}{h_{s, s}} \\
& + \sum_{i = 0}^{s - 1} \left( \sum_{j = 1}^{i} (-1)^{i + j} i! (s - i) \frac{\ell_{s - 2}}{ {s - j - 1 \choose i - j} } {k - j - 1 \choose i - j} {k - 1 \choose j - 1} {v - s - i - 1 \choose s - i - 1} {k \choose i} \right) \frac{\alpha_{s, s - i}}{h_{s, s}},
\end{align*}
from which the result follows.
\end{proof}

We then express $h_{s, i}$ in terms of $\alpha_{s, i}$ and $h_{s, s}$. We use the similar idea as in \cref{lem:palatefulness}, except that in this case we consider the quotients $\frac{\alpha_{s, i}}{h_{s, i}}$ and $\frac{h_{s, s}}{h_{s, i}}$.

\begin{proposition} \label{prop:expostulating}
Let $s$ be a natural number with $s \geq 2$. For every $i \in [0, s]_{\Z}$, $s! \ell_{s - 1} \ell_{s - 2} h_{s, i}$ is a $\ZVal[v, k]$-linear combination of $\alpha_{s, 0}, \dots, \alpha_{s, s}$ and $\lambda_{s, 1}, \dots, \lambda_{s, s}$.
\end{proposition}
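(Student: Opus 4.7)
The plan is to emulate the proof of \cref{lem:palatefulness}, this time applied to the quotients $\alpha_{s,j}/h_{s,i}$ and $h_{s,s}/h_{s,i}$. Concretely, I will first prove the intermediate claim that $\ell_{s-1}\, h_{s,i}$ is a $\ZVal[v,k]$-linear combination of $\alpha_{s,0},\dots,\alpha_{s,s}$ and $h_{s,s}$. Given this, substituting the expression for $s!\,\ell_{s-2}\, h_{s,s}$ from \cref{lem:palatefulness} immediately yields a $\ZVal[v,k]$-linear combination of $\alpha_{s,1},\dots,\alpha_{s,s}$ and $\lambda_{s,1},\dots,\lambda_{s,s}$ equal to $s!\,\ell_{s-1}\ell_{s-2}\, h_{s,i}$, as required. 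The case $i=s$ is already covered by the lemma, and $\alpha_{s,0}=1$ is listed simply to absorb any constant terms appearing in the identity.

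For the intermediate claim, multiply through by the common denominator $(v-2s+1)^{\overline{s}}$. A direct computation from \cref{eq:Bee,eq:Bass,eq:Sycamore} yields
\begin{align*}
(v-2s+1)^{\overline{s}}\, h_{s,i} &= (v-2s+1+i)^{\overline{s-i}}(k-s)^{\overline{i+1}},\\
(v-2s+1)^{\overline{s}}\, h_{s,s} &= (k-s)^{\overline{s+1}},\\
(v-2s+1)^{\overline{s}}\, \alpha_{s,j} &= \binom{s}{j}(k-s)^{\overline{j}}(k-s+1)^{\overline{j}}(v-2s+1+j)^{\overline{s-j}}.
\end{align*}
Therefore it suffices to exhibit a polynomial identity in $\Q[v,k]$ expressing $\ell_{s-1}(v-2s+1+i)^{\overline{s-i}}(k-s)^{\overline{i+1}}$ as a $\ZVal[v,k]$-linear combination of $(k-s)^{\overline{s+1}}$ and the $\alpha_{s,j}$-polynomials above; dividing such an identity through by $(v-2s+1)^{\overline{s}}$ gives the intermediate claim. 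As in \cref{lem:palatefulness}, the identity should be constructed by bi-variate Lagrange interpolation on a product lattice of $(v,k)$-values of the right size and then verified in $\Q[v,k]$ by a bi-degree bound that uniquely pins down the representation.

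The factor $\ell_{s-1}$ appears in exactly the same way that $\ell_{s-2}$ did in the lemma: the interpolation coefficients are rational numbers of the form $1/\binom{n}{m}$ (times polynomials in $v,k$ with integer coefficients) with $0 \le m \le n \le s-1$, and $\ell_{s-1}$ is the least common multiple of all such $\binom{n}{m}$. The main obstacle is writing the resulting identity in a tractable closed form analogous to \cref{eq:logopedia}. The target polynomial has bi-degree $(s-i,i+1)$ in $(v,k)$, while the spanning polynomials have bi-degrees $(s-j,2j)$ and $(0,s+1)$, so the coefficients of the linear combination must themselves be non-constant polynomials in $v$ and $k$; producing these as explicit double sums and checking that every resulting summand lies in $\ZVal[v,k]$ after the $\ell_{s-1}$ normalization will be the bulk of the calculation.
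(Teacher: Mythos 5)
Your high-level architecture matches the paper's: reduce to expressing $h_{s,i}$ as a combination of $\alpha$'s and $h_{s,s}$, then feed in \cref{lem:palatefulness}. But the proof has a genuine gap at its core: the polynomial identity that would realize the intermediate claim is never constructed. You say it ``should be constructed by bi-variate Lagrange interpolation'' and that producing the explicit double sums and verifying that every summand lies in $\ZVal[v,k]$ after multiplying by $\ell_{s-1}$ ``will be the bulk of the calculation'' --- but that verification \emph{is} the proposition. In particular, the assertion that the interpolation coefficients are of the form $1/\binom{n}{m}$ with $n \leq s-1$ (so that $\ell_{s-1}$ clears them) is exactly the kind of claim that cannot be taken on faith here: in \cref{lem:palatefulness} the analogous bookkeeping produces coefficients like $\ell_{s-2}/\binom{s-j-1}{i-j}$ only after very specific prefactors such as $i!(s-i)$ are absorbed into the $s!$ on the left-hand side, and your intermediate claim with the bare normalization $\ell_{s-1}\,h_{s,i}$ (no factorial factors) may well be false as stated; nothing in the proposal rules that out. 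You also give no dimension count or existence argument for why the target $(v-2s+1+i)^{\overline{s-i}}(k-s)^{\overline{i+1}}$ lies in the $\Q[v,k]$-span of the $s+2$ polynomials you list, which, unlike a univariate Bezout identity between coprime polynomials, is not automatic.

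The paper's route is considerably leaner and worth contrasting with yours: it uses only the single term $\alpha_{s,i}$ together with $h_{s,s}$, not all of $\alpha_{s,0},\dots,\alpha_{s,s}$. The reason is that $\frac{i!(s-i)!}{s!}\frac{\alpha_{s,i}}{h_{s,i}} = (k-s+1)^{\overline{i-1}}$ and $(s-i)!\binom{v-s}{s-i}\frac{h_{s,s}}{h_{s,i}} = k^{\underline{s-i}}$ are polynomials in $k$ alone, and $(k-s+1)^{\overline{i-1}}$ and $k^{\underline{s-i}}$ have disjoint root sets, hence are coprime in $\Q[k]$. A univariate Bezout identity $1 = P(k)(k-s+1)^{\overline{i-1}} + Q(k)k^{\underline{s-i}}$ (written explicitly via interpolation at the $s-1$ roots, as in \cref{eq:cetology}) then does the whole job, with the $\ell_{s-1}$ factor arising transparently as $\ell_{s-1}/\binom{s-j-1}{s-i}$ and $\ell_{s-1}/\binom{j}{s-i}$ in the resulting coefficients. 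If you want to salvage your approach, I would abandon the bivariate interpolation in favor of this univariate reduction, and state the intermediate claim with the full normalization $s!\,\ell_{s-1}h_{s,i}$ rather than $\ell_{s-1}h_{s,i}$. (You should also address $i=0$ separately, since the exponent $i-1$ in $(k-s+1)^{\overline{i-1}}$ is negative there; the paper disposes of it via $h_{s,0}=(k-s)\alpha_{s,0}$.)
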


\begin{proof}
If $i = 0$, then $h_{s, i} = k - s = (k - s) \alpha_{s, 0}$ by \cref{eq:Sycamore,eq:Bass}, hence the result holds trivially. From now on, we assume that $i \geq 1$.

As polynomials in $\Q[k]$, we have the identity
\begin{equation}
\label{eq:cetology}
\begin{aligned}
	1 & = \left( \sum_{j = 0}^{s - i - 1} (-1)^{s - j} \frac{ (s - i - j)! }{ (s - j - 1)! } {k \choose j} {k - j - 1 \choose s - i - j - 1} \right) (k - s + 1)^{\overline{i - 1}} \\
	& + \left( \sum_{j = s - i + 1}^{s - 1} (-1)^{s - j - 1} \frac{ (i + j - s)! }{ j! } {k - j - 1 \choose s - j - 1} {k - s + i - 1 \choose i + j - s - 1} \right) k^{\underline{s - i}},
\end{aligned}
\end{equation}
since the right side of \cref{eq:cetology} is a polynomial in $k$ of degree $\leq s - i$, and a direct calculation shows that equality holds for $k \in \{0, \dots, s - i - 1\} \cup \{s - i + 1, \dots, s - 1\}$.

By \cref{eq:Bass,eq:Sycamore},
\begin{align}
\label{eq:spangly}
	\frac{ i! (s - i)! }{ s! } \frac{\alpha_{s, i}}{h_{s, i}} = & (k - s + 1)^{\overline{i - 1}}, \\
\label{eq:polytonalism}
 (s - i)! {v - s \choose s - i} \frac{h_{s, s}}{h_{s, i}} = & k^{\underline{s - i}}.
\end{align}
Substituting \cref{eq:spangly,eq:polytonalism} into \cref{eq:cetology} gives
\begin{align*}
	s! \ell_{s - 1} \ell_{s - 2}
	= & \left( \sum_{j = 0}^{s - i - 1} (-1)^{s - j} i! (s - i - j) \ell_{s - 2} \frac{ \ell_{s - 1} }{ {s - j - 1 \choose s - i} } {k \choose j} {k - j - 1 \choose s - i - j - 1} \right) \frac{\alpha_{s, i}}{h_{s, i}}, \\
	= & \left( \sum_{j = s - i + 1}^{s - 1} (-1)^{s - j - 1} \frac{\ell_{s - 1}}{{j \choose s - i}} {k - j - 1 \choose s - j - 1} {k - s + i - 1 \choose i + j - s - 1} \right) {v - s \choose s - i} \frac{s! \ell_{s - 2} h_{s, s}}{h_{s, i}}.
\end{align*}
Therefore, $s! \ell_{s - 1} \ell_{s - 2} h_{s, i}$ is a $\ZVal[v, k]$-linear combination of $\alpha_{s, i}$ and $s! \ell_{s - 2} h_{s, s}$. Then, the result follows from \cref{lem:palatefulness}.
\end{proof}

Since $\ZVal[v, k] \subseteq {\cal A}_s$, any $\ZVal[v, k]$-linear combination of elements in $\Q \otimes_{\Z} {\cal A}_s$ is also in $\Q \otimes_{\Z} {\cal A}_s$. Applying this to \cref{prop:expostulating} gives \cref{cor:epichorion}.

\begin{corollary} \label{cor:epichorion}
For every $i \in [0, s]_{\Z}$, $s! \ell_{s - 1} \ell_{s - 2} h_{s, i} \in {\cal A}_s$.
\end{corollary}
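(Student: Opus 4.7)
The plan is to invoke \cref{prop:expostulating} directly and then verify that each constituent piece already lies in ${\cal A}_s$, so that the same linear combination lands in ${\cal A}_s$ after evaluation at any $(v_0, k_0) \in {\cal D}_s$.

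First, I would recall the two ``seed'' inclusions that were already established in this section: $\lambda_{s, i} \in {\cal A}_s$ for $i \in [0, s]_\Z$ (because a tight $2s$-$(v, k, \lambda)$ design is, in particular, an $(s + i)$-$(v, k, \lambda_{s, i})$ design, so $\lambda_{s,i}$ is the block-count through a fixed $(s+i)$-subset), and $\alpha_{s, i} \in {\cal A}_s$ for $i \in [0, s]_\Z$ (by \cite[Corollary 2.2]{Xiang2018}, since the $\alpha_{s,i}$'s are, up to an integer factor ${v - s \choose s}/s!$, the elementary symmetric functions of the $s$ integer intersection numbers of a tight $2s$-design, which are integers). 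Second, I would note that ${\cal A}_s$ is a ring containing $\ZVal[v, k]$: any $f \in \ZVal[v, k]$ takes integer values at every pair of nonnegative integers $(v_0, k_0)$, and in particular at every $(v_0, k_0) \in {\cal D}_s$.

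Next, I would invoke \cref{prop:expostulating}: for each $i \in [0, s]_\Z$, the function $s! \ell_{s-1} \ell_{s-2} h_{s, i}$ is a $\ZVal[v, k]$-linear combination of the functions $\alpha_{s, 0}, \dots, \alpha_{s, s}$ and $\lambda_{s, 1}, \dots, \lambda_{s, s}$. Evaluating at any $(v_0, k_0) \in {\cal D}_s$, every $\ZVal[v, k]$-coefficient becomes an integer, and every $\alpha_{s, j}(v_0, k_0)$ and $\lambda_{s, j}(v_0, k_0)$ is an integer by the first step. Hence $s! \ell_{s-1} \ell_{s-2} h_{s, i}(v_0, k_0) \in \Z$ for all $(v_0, k_0) \in {\cal D}_s$, which is exactly the statement $s! \ell_{s-1} \ell_{s-2} h_{s, i} \in {\cal A}_s$.

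There is essentially no obstacle here: all the hard work is inside \cref{prop:expostulating} and in the prior facts $\lambda_{s, i}, \alpha_{s, i} \in {\cal A}_s$. The only point deserving a one-sentence check is the slight strengthening from $\Q \otimes_\Z {\cal A}_s$ (rational-valued with bounded denominators) to ${\cal A}_s$ (integer-valued): it is not enough to know $h_{s, i} \in \Q \otimes_\Z {\cal A}_s$, we must observe that the specific coefficient $s! \ell_{s-1} \ell_{s-2}$ clears all denominators, which is exactly what \cref{prop:expostulating} asserts.
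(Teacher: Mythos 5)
Your proposal is correct and follows essentially the same route as the paper: apply \cref{prop:expostulating} and use that $\alpha_{s,i}, \lambda_{s,i} \in {\cal A}_s$ together with $\ZVal[v,k] \subseteq {\cal A}_s$, so a $\ZVal[v,k]$-linear combination of elements of ${\cal A}_s$ lies in ${\cal A}_s$. (One small caveat in a parenthetical remark: the $\alpha_{s,i}$ are the coefficients of $\Phi_s$ in the falling-factorial basis, not literally the elementary symmetric polynomials of the intersection numbers, but since you and the paper both ultimately cite \cite[Corollary 2.2]{Xiang2018} for $\alpha_{s,i} \in {\cal A}_s$, this does not affect the argument.)
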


\begin{proof}
The result follows from \cref{prop:expostulating} and that $\alpha_{s, 0}, \dots, \alpha_{s, s} \in {\cal A}_s$ and $\lambda_{s, 0}, \dots, \lambda_{s, s} \in {\cal A}_s $.
\end{proof}

\section{Auxiliary function $H_{s, r}$}
\label{sec:carouser}

Let $s$ be a natural number, and $r$ an even natural number with $r \leq s$. We construct an auxiliary function $H_{s, r}$ in $h_{s, i}$'s as follows:
\begin{equation} \label{eq:chafferer}
	H_{s, r} := \sum_{i = 0}^{r} (-1)^i {r \choose i} h_{s, s - r + i} h_{s, s - i}.
\end{equation}

The goal of this section is to prove that this $H_{s, r}$ satisfies conditions \ref{itm:synema}, \ref{itm:Cerdonian} and \ref{itm:pectoriloquism} in \cref{sec:subexternal}. First, this complicated sum $H_{s, r}$ has a simple closed-form formula as to be shown in \cref{thm:administration}, which is proved in \cref{sec:stack}. The conditions \ref{itm:Cerdonian} and \ref{itm:pectoriloquism} follow directly from the explicit closed-form formula. Second, since \cref{cor:epichorion} proves that $h_{s, 0}, \dots, h_{s, s}$ satisfy the condition \ref{itm:synema}, namely taking rational values with bounded denominators, so does their rational quadratic combination $H_{s, r}$. \cref{prop:foreadapt} is an explicit version of this statement, and is proved in \cref{sec:unflagitious}.

The auxiliary function $H_{s, r}$ is discovered in our attempts to give a proof of \cref{thm:myxocyte}. It is quite mysterious, and at this moment, we do not have any combinatorial interpretations of $H_{s, r}$.

\subsection{Simplifying $H_{s, r}$}
\label{sec:stack}

Dixon's identity is an identity on the hypergeometric function
\[
	\pFq{3}{2}{a & b & c}{ & d & e}{z} := \sum_{i = 0}^\infty \frac{a^{\overline{i}} b^{\overline{i}} c^{\overline{i}}}{d^{\overline{i}} e^{\overline{i}}} \frac{z^i}{i!}.
\]

\begin{theorem}[Dixon's identity] \label{thm:shippy}
Let $a, b, c$ be complex variables. In the region $\RePart (1 + a / 2 - b - c) \gt 0$,
\[
	\pFq{3}{2}{a & b & c}{& 1 + a - b & 1 + a - c}{1} = \frac{\Gamma(1 + a / 2) \Gamma(1 + a / 2 - b - c) \Gamma(1 + a - b) \Gamma(1 + a - c)}{\Gamma(1 + a) \Gamma(1 + a - b - c) \Gamma(1 + a / 2 - b) \Gamma(1 + a / 2 - c)},
\]
where $\Gamma$ is the Gamma function, and both sides should be understood as their analytic continuations.
\end{theorem}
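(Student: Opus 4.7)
The statement is Dixon's classical summation theorem for well-poised ${}_3F_2(1)$ series. The plan is to establish it first in a terminating specialization and then extend by analytic continuation. Specifically, I would first specialize $c = -n$ with $n$ a non-negative integer; since $c^{\overline{i}} = (-n)^{\overline{i}}$ vanishes for $i \gt n$, the series on the left collapses to the finite sum
\[
	\sum_{i=0}^{n} \frac{a^{\overline{i}} b^{\overline{i}} (-n)^{\overline{i}}}{i!\,(1+a-b)^{\overline{i}}(1+a+n)^{\overline{i}}},
\]
a rational function in $a$ and $b$. Simultaneously, repeated use of $\Gamma(x+1) = x\Gamma(x)$ reduces the Gamma ratios on the right to products of linear factors in $a$ and $b$, so the claim becomes a purely algebraic identity in $a, b$, parameterized by $n$.

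I would verify this algebraic identity by induction on $n$. The base case $n = 0$ is immediate, since both sides collapse to $1$ (the four Gamma factors on the right pair off via $\Gamma(1+a)/\Gamma(1+a-b) \cdot \Gamma(1+a/2-b)/\Gamma(1+a/2)$ and its complement). For the inductive step, I would apply a standard three-term contiguous relation expressing the specialized ${}_3F_2$ at $c = -n$ in terms of the one at $c = -(n-1)$, and match the resulting shift on the right using the Gamma recurrence. A conceptually cleaner alternative is to apply the Pfaff-Saalsch\"utz theorem after a Thomae-type transformation that rebalances the series into a Saalsch\"utzian one, reducing Dixon's identity to a terminating balanced sum whose closed-form evaluation is classical. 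To pass from the discrete set $c \in -\Z_{\geq 0}$ to the full convergence region $\RePart(1 + a/2 - b - c) \gt 0$, I fix $a, b$ generically, view both sides as meromorphic functions of $c$, and note that their difference is holomorphic on a right half-plane in $-c$ and vanishes at every non-negative integer there. Stirling's asymptotics show the difference is of exponential type strictly less than $\pi$, so Carlson's theorem forces it to vanish identically.

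The main obstacle is executing the terminating case cleanly: the list of contiguous relations for ${}_3F_2$ is long, and identifying one that aligns with the Gamma recurrence on the right takes a bit of fiddling. The Thomae-Saalsch\"utz route avoids direct recursion, but then the bulk of the work is in justifying the particular transformation used. The analytic continuation step via Carlson's theorem is standard and should pose no substantive difficulty, given the classical growth of the Gamma function.
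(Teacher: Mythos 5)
The paper gives no proof of this theorem. Dixon's identity is cited as a classical summation formula for well-poised ${}_3F_2(1)$ series, and the paper proceeds directly to apply its terminating specialization (\cref{cor:Sunna}) in the evaluation of $H_{s,r}$. There is therefore no in-paper argument to compare your proposal against; you are supplying a proof of a black-boxed classical result.

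That said, your outline is a correct and standard route: prove the terminating case $c = -n$, then extend to the half-plane $\RePart(1 + a/2 - b - c) \gt 0$ by Carlson's theorem. Two details deserve care if you were to write it out. First, the ``Thomae--Saalsch\"utz'' alternative is not literally a single rebalancing transformation followed by one application of Pfaff--Saalsch\"utz; the well-poisedness condition $1 + a = b + d = c + e$ is not taken to the Saalsch\"utzian condition $1 + a + b + c = d + e$ by any of the two-term Thomae relations. The usual reduction of terminating Dixon to Pfaff--Saalsch\"utz instead pairs the $i$-th and $(n-i)$-th terms across the center of the sum, or equivalently uses a quadratic transformation of a ${}_2F_1$; that extra step should be acknowledged. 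Second, for the Carlson step you must verify the exponential-type bound explicitly: setting $c = -z$ and sending $|\mathrm{Im}\,z| \to \infty$ with $a, b$ fixed, the two numerator Gammas $\Gamma(1 + a/2 - b + z)$, $\Gamma(1 + a + z)$ and the two denominator Gammas $\Gamma(1 + a - b + z)$, $\Gamma(1 + a/2 + z)$ cancel in Stirling's estimate, leaving polynomial growth; the series side is likewise of moderate growth on the half-plane. This cancellation is the crux of why Carlson's theorem applies, and it is worth stating rather than leaving implicit. With those two points tightened, your sketch is a complete proof of the classical theorem.
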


When we choose $a := -r$ for some natural number $r$, $a^{\overline{i}} = 0$ for $i \geq r + 1$. In this case, the hypergeometric function reduces to a polynomial in $z$ with coefficients in $\Q(b, c, d, e)$:
\begin{equation} \label{eq:outland}
	\pFq{3}{2}{-r & b & c}{ & d & e}{z} = \sum_{i = 0}^{r} (-1)^i {r \choose i} \frac{b^{\overline{i}} c^{\overline{i}}}{d^{\overline{i}} e^{\overline{i}}} z^i.
\end{equation}
We have the following specialization of the Dixon's identity.

\begin{corollary}
\label{cor:Sunna}
Let $s$ be a natural number, and $r$ an even natural number. Let $v$ and $k$ be real variables. If $v \geq k + s$ and $k \geq r$, then
\[
\pFq{3}{2}{-r & k - r + 1 & - v + s}{ & - k & v - r - s + 1}{1} = \frac{r^{\underline{r / 2}} (v - k - s)^{\overline{r / 2}}}{k^{\underline{r / 2}} (v - s - r + 1)^{\overline{r / 2}} },
\]
where the both sides are regarded as rational functions in $v$ and $k$.
\end{corollary}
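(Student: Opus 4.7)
The plan is to derive the corollary as a direct specialization of Dixon's identity (\cref{thm:shippy}), after choosing parameters that match the hypergeometric function on the left-hand side, and then to simplify the resulting ratio of eight Gamma functions into the rising/falling factorials on the right-hand side. Because both sides of the claimed identity are rational functions in $v$ and $k$, it suffices to verify the identity for generic real $v, k$ in the convergence region $v \geq k+s$, $k \geq r$, and extend by polynomial identity.

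First I set $a := -r$, $b := k-r+1$, $c := -v+s$. A quick check shows $1+a-b = -k$ and $1+a-c = v-r-s+1$, so \cref{thm:shippy} applies (the convergence hypothesis $\RePart(1+a/2-b-c) = v-k-s+r/2 \gt 0$ holds since $v\geq k+s$ and $r \geq 2$; the $r=0$ case is trivial). Substituting these into the right-hand side of Dixon's formula and computing the eight arguments, the task reduces to simplifying the four ratios
\[
\frac{\Gamma(1+a/2)}{\Gamma(1+a)},\quad
\frac{\Gamma(1+a/2-b-c)}{\Gamma(1+a-b-c)},\quad
\frac{\Gamma(1+a-b)}{\Gamma(1+a/2-b)},\quad
\frac{\Gamma(1+a-c)}{\Gamma(1+a/2-c)}.
\]
Three of them are routine. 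The identity $\Gamma(x+n)/\Gamma(x) = x^{\overline n}$ gives
$\Gamma(v-k-s+r/2)/\Gamma(v-k-s) = (v-k-s)^{\overline{r/2}}$,
$\Gamma(v-r-s+1)/\Gamma(v-s-r/2+1) = 1/(v-r-s+1)^{\overline{r/2}}$, and
$\Gamma(-k)/\Gamma(-k+r/2) = 1/(-k)^{\overline{r/2}} = (-1)^{r/2}/k^{\underline{r/2}}$.

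The main obstacle is the fourth ratio $\Gamma(1-r/2)/\Gamma(1-r)$: for $r \geq 2$ even, both numerator and denominator have simple poles, so the naive use of $\Gamma(x+n)/\Gamma(x) = x^{\overline n}$ is off by a factor of $2$. The plan is to handle this via Legendre's duplication formula $\Gamma(1+a) = \frac{2^a}{\sqrt{\pi}}\Gamma((1+a)/2)\Gamma(1+a/2)$, which gives
\[
\frac{\Gamma(1+a/2)}{\Gamma(1+a)} = \frac{\sqrt{\pi}}{2^a\Gamma((1+a)/2)},
\]
and to evaluate the right-hand side at $a = -r$ (a regular point for this expression, since $(1-r)/2$ is a half-integer). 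Using the reflection formula $\Gamma(1/2-n)\Gamma(1/2+n) = \pi/\cos(\pi n) = (-1)^n\pi$ together with $\Gamma(1/2+n) = (2n)!\sqrt{\pi}/(4^n n!)$ at $n = r/2$ yields, after cancellation, $\Gamma(1-r/2)/\Gamma(1-r) = (-1)^{r/2}\,r!/(r/2)! = (-1)^{r/2}\,r^{\underline{r/2}}$. (Alternatively one sets $a = -r + \delta$ and reads off the ratio of residues, with the factor $2$ appearing from the chain rule applied to $\Gamma(1+a/2)$.)

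Finally, assembling the four pieces multiplies the two signs $(-1)^{r/2}$ to give $+1$, producing
\[
\frac{\Gamma(1-r/2)}{\Gamma(1-r)}\cdot(v-k-s)^{\overline{r/2}}\cdot\frac{1}{(v-r-s+1)^{\overline{r/2}}}\cdot\frac{(-1)^{r/2}}{k^{\underline{r/2}}} = \frac{r^{\underline{r/2}}(v-k-s)^{\overline{r/2}}}{k^{\underline{r/2}}(v-s-r+1)^{\overline{r/2}}},
\]
which is the desired identity. A short sanity check at $r=2$ (where the left-hand side evaluates by hand to $2(v-k-s)/(k(v-s-1))$) confirms the constant.
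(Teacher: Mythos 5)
Your proposal is correct and follows exactly the approach the paper sketches: set $a := -r$, $b := k-r+1$, $c := -v+s$ in Dixon's identity and evaluate the resulting ratio of Gamma functions, with the only delicate point being the degenerate ratio $\Gamma(1-r/2)/\Gamma(1-r)$ at $a=-r$, which you correctly resolve via duplication and reflection to get $(-1)^{r/2}\,r!/(r/2)!$. The paper explicitly states the same parameter substitution and "taking the limit as $a\to -r$" but omits all details; your proof supplies exactly those details (and the $r=2$ sanity check is a good touch), so the two proofs coincide in substance.
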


We could prove \cref{cor:Sunna} by setting $b := k - r + 1$, $c := - v + s$ and then taking the limit of \cref{thm:shippy} as $a \to - r$. The inequalities $v \geq k + s$ and $k \geq r$ are to make sure that the summands in the hypergeometric series $_3F_2$ have nonzero denominators and $\RePart(1 + a / 2 - b - c) \gt 0$. The details of the proof are omitted here.

It turns out that when we substitute $h_{s, i}$'s into $H_{s, r}$ in \cref{eq:chafferer}, the complicated sum $H_{s, r}$ is just the left side of the Dixon's identity in disguise. This means that we could use Dixon's identity to give a closed-form formula for $H_{s, r}$.

\begin{theorem} \label{thm:administration}
Let $s$ be a natural number, and $r$ an even natural number with $r \leq s$. Then,
\[
	H_{s, r} = \frac{r!}{(r / 2)!} G_{s, r},
\]
where
\begin{equation} \label{eq:quadrantlike}
	G_{s, r} := \frac{ (v - k - s)^{\overline{r / 2}} (k - s)^{\overline{s - r / 2 + 1}} (k - s)^{\overline{s - r + 1}} }{ (v - 2 s + 1)^{\overline{s}} (v - 2 s + 1)^{\overline{s - r / 2}} }.
\end{equation}
\end{theorem}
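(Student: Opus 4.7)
The plan is to expand the summands of $H_{s,r}$ using the definition of $h_{s,i}$, factor out everything independent of the summation index $i$, and recognize what remains as exactly the left-hand side of Corollary \ref{cor:Sunna}.

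First I would write out
\[
h_{s, s - r + i} h_{s, s - i} = \frac{(k - s)^{\overline{s - r + i + 1}} (k - s)^{\overline{s - i + 1}}}{(v - 2 s + 1)^{\overline{s - r + i}} (v - 2 s + 1)^{\overline{s - i}}},
\]
and split each rising factorial at a chosen offset using the identity $(x)^{\overline{a + b}} = (x)^{\overline{a}} (x + a)^{\overline{b}}$. For the two factors with index $s - r + i$ I would split at $s - r + 1$ and $s - r$, obtaining $(k - s)^{\overline{s - r + 1}} (k - r + 1)^{\overline{i}}$ and $(v - 2 s + 1)^{\overline{s - r}} (v - s - r + 1)^{\overline{i}}$ respectively. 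For the factors with index $s - i$ I would split at $s + 1$ and $s$ and use the quotient form $(x)^{\overline{a}} = (x)^{\overline{a+b}}/(x+a)^{\overline{b}}$; the $i$-dependent leftovers become $(k - i + 1)^{\overline{i}} = k^{\underline{i}}$ and $(v - s - i + 1)^{\overline{i}} = (v - s)^{\underline{i}}$. Converting these falling factorials to rising ones by $(x)^{\underline{i}} = (-1)^i (-x)^{\overline{i}}$ supplies the sign cancellations needed below.

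After this bookkeeping, an $i$-independent prefactor
\[
P := \frac{(k - s)^{\overline{s - r + 1}} (k - s)^{\overline{s + 1}}}{(v - 2 s + 1)^{\overline{s - r}} (v - 2 s + 1)^{\overline{s}}}
\]
comes out of the sum, and the residue is
\[
\sum_{i = 0}^{r} (-1)^i \binom{r}{i} \frac{(k - r + 1)^{\overline{i}} (-v + s)^{\overline{i}}}{(-k)^{\overline{i}} (v - s - r + 1)^{\overline{i}}},
\]
which by \cref{eq:outland} is precisely $\pFq{3}{2}{-r & k - r + 1 & -v + s}{ & -k & v - s - r + 1}{1}$. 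Applying \cref{cor:Sunna} replaces this sum with $r^{\underline{r / 2}} (v - k - s)^{\overline{r / 2}} / \bigl(k^{\underline{r / 2}} (v - s - r + 1)^{\overline{r / 2}}\bigr)$, an equality of rational functions in $v$ and $k$.

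Finally I would simplify $H_{s, r} = P \cdot r^{\underline{r / 2}} (v - k - s)^{\overline{r / 2}} / \bigl(k^{\underline{r / 2}} (v - s - r + 1)^{\overline{r / 2}}\bigr)$ to match \cref{eq:quadrantlike}. The two key collapses are $(k - s)^{\overline{s + 1}} = (k - s)^{\overline{s - r / 2 + 1}} (k - r / 2 + 1)^{\overline{r / 2}}$ with $(k - r / 2 + 1)^{\overline{r / 2}} = k^{\underline{r / 2}}$, which cancels the $k^{\underline{r / 2}}$ in the denominator, and $(v - 2 s + 1)^{\overline{s - r}} (v - s - r + 1)^{\overline{r / 2}} = (v - 2 s + 1)^{\overline{s - r / 2}}$. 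Together with $r^{\underline{r / 2}} = r! / (r / 2)!$, these give the stated formula. The main obstacle is purely bookkeeping: choosing the split points of the four rising factorials so that the $i$-independent prefactor assembles cleanly and the $i$-dependent residue has exactly the parameter pattern $(-r, k - r + 1, -v + s; -k, v - s - r + 1)$ demanded by Dixon's specialization; the sign tracking through the falling-to-rising conversions is where errors are most likely.
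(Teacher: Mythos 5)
Your proposal is correct and follows essentially the same route as the paper: pull the $h_{s,i}$'s apart via the splitting identity for rising factorials so that an $i$-independent prefactor $P$ extracts, recognize the remaining sum as the terminating ${}_3F_2(-r, k-r+1, -v+s; -k, v-s-r+1; 1)$ of \cref{eq:outland}, apply \cref{cor:Sunna}, and then recombine the rising/falling factorials to land on \cref{eq:quadrantlike}. All the offsets and sign conversions you chose are exactly the ones the paper uses (it just writes the factored form directly without spelling out the splits), so there is no meaningful divergence.
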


\begin{proof}
We proceed with the following calculation.
\begin{align*}
H_{s, r} =
& \sum_{i = 0}^{r} (-1)^i {r \choose i} h_{s, s - r + i} h_{s, s - i} & (\text{by \cref{eq:chafferer}}) \\
= & \sum_{i = 0}^{r} (-1)^i {r \choose i} \frac{(k - s)^{\overline{s - r + i + 1}}}{(v - 2 s + 1)^{\overline{s - r + i}}} \frac{(k - s)^{\overline{s - i + 1}}}{(v - 2 s + 1)^{\overline{s - i}}} & (\text{by \cref{eq:Sycamore}}) \\
= & \frac{ (k - s)^{\overline{s + 1}} (k - s)^{\overline{s - r + 1}} }{ (v - 2 s + 1)^{\overline{s}} (v - 2 s + 1)^{\overline{s - r}} } \cdot \sum_{i = 0}^{r} (-1)^i {r \choose i} \frac{(k - r + 1)^{\overline{i}} (v - s)^{\underline{i}}}{k^{\underline{i}} (v - s - r + 1)^{\overline{i}}} \\
= & \frac{ (k - s)^{\overline{s + 1}} (k - s)^{\overline{s - r + 1}} }{ (v - 2 s + 1)^{\overline{s}} (v - 2 s + 1)^{\overline{s - r}} } \cdot \sum_{i = 0}^{r} (-1)^i {r \choose i} \frac{(k - r + 1)^{\overline{i}} (- v + s)^{\overline{i}}}{(-k)^{\overline{i}} (v - s - r + 1)^{\overline{i}}} & ( x^{\underline{n}} = (-1)^n x^{\overline{n}} ) \\
= & \frac{(k - s)^{\overline{s + 1}} (k - s)^{\overline{s - r + 1}} }{ (v - 2 s + 1)^{\overline{s}} (v - 2 s + 1)^{\overline{s - r}} }
\cdot \pFq{3}{2}{-r & k - r + 1 & - v + s}{ & -k & v - r - s + 1}{1} & (\text{by \cref{eq:outland}}) \\
= & \frac{ (k - s)^{\overline{s + 1}} (k - s)^{\overline{s - r + 1}} }{ (v - 2 s + 1)^{\overline{s}} (v - 2 s + 1)^{\overline{s - r}} }
\cdot \frac{r^{\underline{r / 2}} (v - k - s)^{\overline{r / 2}} }{ k^{\underline{r / 2}} (v - s - r + 1)^{\overline{r / 2}}} & (\text{by \cref{cor:Sunna}}) \\
= & \frac{r!}{(r / 2)!} G_{s, r}. & (\text{by \cref{eq:quadrantlike}})
\end{align*}
\end{proof}

\subsection{Denominators of $G_{s, r}$}
\label{sec:unflagitious}

Since $h_{s, i} \in \Q \otimes_{\Z} {\cal A}_s$ and $H_{s, r}$ is a rational quadratic polynomial in $h_{s, i}$, we have $H_{s, r} \in \Q \otimes_{\Z}$. Using the estimate of denominators of $h_{s, i}$'s in \cref{cor:epichorion}, we could estimate the denominator of $H_{s, r}$. Combined with \cref{thm:administration}, we could estimate the denominator of $G_{s, r}$.

\begin{proposition} \label{prop:foreadapt}
Let $s$ be a natural number, and $r$ an even natural number with $r \leq s$. Let
\begin{equation} \label{eq:retreative}
	F_{s, r} := s!^2 \ell_{s - 1}^2 \ell_{s - 2}^2 \frac{r!}{(r / 2)!} \in \Z.
\end{equation}
Then, $F_{s, r} G_{s, r} \in {\cal A}_s$.
\end{proposition}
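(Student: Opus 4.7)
The plan is to read off the result directly from the two preceding facts: the closed form $H_{s,r} = \frac{r!}{(r/2)!} G_{s,r}$ from \cref{thm:administration}, and the bounded-denominator statement $s!\,\ell_{s-1}\,\ell_{s-2}\, h_{s,j} \in {\cal A}_s$ from \cref{cor:epichorion}. The only conceptual observation needed is that ${\cal A}_s$, being the ring of integer-valued functions on ${\cal D}_s$, is closed under products and under $\Z$-linear combinations.

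Concretely, I would first multiply the defining formula \cref{eq:chafferer} for $H_{s,r}$ by $(s!\,\ell_{s-1}\,\ell_{s-2})^2$ and factor the scalar into the two $h$-factors, obtaining
\[
s!^2 \ell_{s-1}^2 \ell_{s-2}^2\, H_{s,r}
 = \sum_{i=0}^{r} (-1)^i \binom{r}{i} \bigl(s!\,\ell_{s-1}\,\ell_{s-2}\, h_{s,s-r+i}\bigr)\bigl(s!\,\ell_{s-1}\,\ell_{s-2}\, h_{s,s-i}\bigr).
\]
By \cref{cor:epichorion}, each of the two parenthesised factors lies in ${\cal A}_s$; since ${\cal A}_s$ is a ring, each summand lies in ${\cal A}_s$; and since the coefficients $(-1)^i\binom{r}{i}$ are integers, the whole sum lies in ${\cal A}_s$. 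Next I substitute the closed form from \cref{thm:administration}, $H_{s,r} = \frac{r!}{(r/2)!} G_{s,r}$, which turns the left-hand side into $s!^2 \ell_{s-1}^2 \ell_{s-2}^2 \frac{r!}{(r/2)!} G_{s,r} = F_{s,r} G_{s,r}$ by \cref{eq:retreative}. This gives $F_{s,r} G_{s,r} \in {\cal A}_s$, which is exactly the claim.

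There is essentially no obstacle here: all the real work has been done in \cref{lem:palatefulness,prop:expostulating} (to get \cref{cor:epichorion}) and in \cref{thm:administration} (to get the closed form via Dixon's identity). The only thing to double-check is bookkeeping on the integrality of $F_{s,r}$ itself, but $s!$, $\ell_{s-1}$, $\ell_{s-2}$ are manifestly integers and $\frac{r!}{(r/2)!} = (r/2+1)(r/2+2)\cdots r$ is a product of consecutive positive integers, so $F_{s,r} \in \Z$ as claimed in the statement.
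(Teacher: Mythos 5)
Your proof is correct and follows essentially the same route as the paper's: rewrite $F_{s,r} G_{s,r}$ as $s!^2\ell_{s-1}^2\ell_{s-2}^2 H_{s,r}$ via Theorem~\ref{thm:administration}, expand $H_{s,r}$ by its definition with the scalar distributed into the two $h$-factors, and invoke Corollary~\ref{cor:epichorion} together with the ring structure of ${\cal A}_s$.
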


\begin{proof}
We first rewrite $F_{s, r} G_{s, r}$ as follows.
\begin{align*}
F_{s, r} G_{s, r} & = \frac{(r / 2)!}{r!} F_{s, r} H_{s, r} & (\text{by \cref{thm:administration}}) \\
& = \sum_{i = 0}^r (-1)^i {r \choose i} \left(s! \ell_{s - 1} \ell_{s - 2} h_{s, s - r + i}\right) \left(s! \ell_{s - 1} \ell_{s - 2} h_{s, s - i}\right). & (\text{by \cref{eq:chafferer,eq:retreative}})
\end{align*}
Then, result follows from \cref{cor:epichorion}.
\end{proof}

\section{Upper bounds for $v$}
\label{sec:ineffaceability}

The goal of this section is to give upper bounds for $v$ in terms of $s$ and some other parameters that could be chosen.

Assume that $(v, k)$ are parameters of a nontrivial tight $2 s$-design. Recall that in \cref{prop:foreadapt} we proved that $F_{s, r} G_{s, r} \in {\cal A}_s$. Moreover, $F_{s, r} G_{s, r}$ is positive by \cref{eq:retreative,eq:quadrantlike}. In other words, $F_{s, r} G_{s, r}$ takes positive integer value at $(v, k)$. It is possible to obtain an upper bound for $v$ by analyzing the inequality $\ln F_{s, r} + \ln G_{s, r} \geq 0$. However, if we write down the complete arguments, which we will not do in this paper, it would only allow us to prove $v \ll s^{5}$ as $s \to \infty$.

We can use this idea to obtain a much better bound, as long as we apply a small twist. Let
\begin{equation}
\label{eq:camara}
	\tilde{G}_{s, r} := \frac{ {v - k - s + r / 2 - 1\choose r / 2} {k - r / 2 \choose s - r / 2 + 1} {k - r \choose s - r + 1}}{ {v - s \choose s} {v - s - r / 2 \choose s - r / 2} } = G_{s, r} \frac{s!}{(r / 2)! (s - r + 1)! (s - r / 2 + 1)} \in \Q(v, k).
\end{equation}
and
\begin{equation}
\label{eq:landscape}
	\tilde{F}_{s, r} := s! r! \ell_{s - 1}^2 \ell_{s - 2}^2 (s - r + 1)! (s - r / 2 + 1) = F_{s, r} \frac{(r / 2)! (s - r + 1)! (s - r / 2 + 1)}{s!} \in \Z.
\end{equation}
Clearly, $\tilde{F}_{s, r} \tilde{G}_{s, r} = F_{s, r} G_{s, r} \in {\cal A}_s$ by \cref{eq:camara,eq:landscape}.

The constant $\tilde{F}_{s, r}$ contains a lot of small primes, which are usually more than enough to make $\tilde{F}_{s, r} \tilde{G}_{s, r}$ an integer in $\Z_{(p)}$. This suggests us to choose carefully an $\tilde{F}^b_{s, r}$, which is much smaller than $\tilde{F}_{s, r}$, and prove that $\tilde{F}^b_{s, r} \tilde{G}_{s, r} \in {\cal A}_s$.

We first construct such an $\tilde{F}^b_{s, r}$ in \cref{sec:preselect}. In \cref{sec:appointee}, by examining the inequality $\ln \tilde{F}^b_{s, r} + \ln \tilde{G}_{s, r} \geq 0$, \cref{thm:persuasibility} gives an upper bound for $v$ in terms of $s$, $r$ and $b$. If we choose some special parameters $r$ and $b$ in \cref{thm:persuasibility}, then it gives $v \ll s$ as $s \to \infty$. In \cref{sec:babyfied}, we give an explicit version of this asymptotic result that $v \leq 2,000,000 s$ for $s \geq 627$.

\subsection{Construction of $\tilde{F}^b_{s, r}$}
\label{sec:preselect}

We construct $\tilde{F}^b_{s, r}$ by reducing the valuation of primes $p \leq b$ in $\tilde{F}_{s, r}$ artificially. Let
\begin{equation} \label{eq:platformism}
	\tilde{F}^b_{s, r} := \prod_{\text{prime $p \in [2, b]_{\Z}$}} p^{2 \lfloor \log_p (v - s) \rfloor} \prod_{\text{prime $p \in (b, s]_{\Z}$}} p^{{\rm val}_p(\tilde{F}_{s, r})}.
\end{equation}

\begin{proposition} \label{prop:imi}
Let $r$ be an even natural number with $r \leq s$. If there exists a nontrivial tight $2s$-$(v, k, \lambda)$ design, then $\tilde{F}^b_{s, r} \tilde{G}_{s, r}$ is a positive integer.
\end{proposition}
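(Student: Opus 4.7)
The plan is to split the claim into positivity of $\tilde{F}^b_{s,r}\tilde{G}_{s,r}$ and $p$-adic integrality at every prime $p$. Positivity is immediate: $\tilde{F}^b_{s,r} > 0$ by its construction in \cref{eq:platformism}, $\tilde{F}_{s,r} > 0$ by \cref{eq:landscape}, and $F_{s,r} G_{s,r} > 0$ was already noted at the start of \cref{sec:ineffaceability}. Since $\tilde{F}_{s,r} \tilde{G}_{s,r} = F_{s,r} G_{s,r}$ (recorded right after \cref{eq:landscape}), $\tilde{G}_{s,r}$ is positive at the parameters of the design, hence so is $\tilde{F}^b_{s,r} \tilde{G}_{s,r}$.

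For integrality, I would bound ${\rm val}_p(\tilde{F}^b_{s,r} \tilde{G}_{s,r}(v,k))$ by cases matching the three regimes built into \cref{eq:platformism}. First, when $p > s$, the factor $\tilde{F}^b_{s,r}$ contributes nothing, and because $\tilde{F}_{s,r}$ is assembled from $s!$, $r!$, $(s-r+1)!$, $(s-r/2+1)$, and $\ell_{s-1}^2 \ell_{s-2}^2$ whose prime factors are all at most $s$, one also has ${\rm val}_p(\tilde{F}_{s,r}) = 0$. Combined with $\tilde{F}_{s,r} \tilde{G}_{s,r} = F_{s,r} G_{s,r} \in {\cal A}_s$ from \cref{prop:foreadapt}, this forces ${\rm val}_p(\tilde{G}_{s,r}(v,k)) \geq 0$. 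Next, when $p \in (b,s]_{\Z}$, \cref{eq:platformism} sets ${\rm val}_p(\tilde{F}^b_{s,r}) = {\rm val}_p(\tilde{F}_{s,r})$, so integrality is inherited directly from that of $\tilde{F}_{s,r} \tilde{G}_{s,r}$.

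The hard part will be the remaining case $p \leq b$, which is exactly what motivates the binomial-coefficient reformulation \cref{eq:camara} over the rising factorial form \cref{eq:quadrantlike}. For nontrivial tight $2s$-designs the standard parameter inequalities ensure that the three numerator binomial coefficients of $\tilde{G}_{s,r}$ have non-negative integer arguments, and therefore contribute non-negatively to ${\rm val}_p$. The only source of $p$-adic denominator is thus $\binom{v-s}{s} \binom{v-s-r/2}{s-r/2}$, and I would bound its valuation using Kummer's theorem: for integers $0 \leq k \leq n$, ${\rm val}_p\binom{n}{k}$ equals the number of carries when adding $k$ to $n-k$ in base $p$, and is therefore at most $\lfloor \log_p n \rfloor$. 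Applying this to each denominator binomial yields ${\rm val}_p(\tilde{G}_{s,r}) \geq -2\lfloor \log_p(v-s) \rfloor$, which is precisely compensated by the factor $p^{2\lfloor \log_p(v-s) \rfloor}$ in \cref{eq:platformism}. The delicacy is to confirm that this Kummer-type bound absorbs the \emph{entire} $p$-adic denominator of $\tilde{G}_{s,r}$; once that is done, the exponent $2\lfloor \log_p(v-s) \rfloor$ in the definition of $\tilde{F}^b_{s,r}$ is by design exactly what is needed to close the argument.
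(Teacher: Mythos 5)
Your proposal is correct and follows essentially the same three-step structure as the paper's own proof: positivity from the explicit product formulas, reduction to \cref{prop:foreadapt} for primes $p > b$, and for primes $p \leq b$ the binomial-coefficient form \cref{eq:camara} together with the bound ${\rm val}_p\binom{n}{m} \leq \lfloor \log_p n \rfloor$. The "delicacy" you flag at the end is in fact immediate: since the nontrivial design hypothesis gives $k \geq 2s+1$ and $v - k \geq 2s+1$, each numerator binomial in \cref{eq:camara} has nonnegative integer arguments and hence contributes ${\rm val}_p \geq 0$, so the entire negative contribution does come from the two denominator binomials, and the paper's one-line estimate $2\lfloor \log_p(v-s)\rfloor - \lfloor\log_p(v-s)\rfloor - \lfloor\log_p(v-s-r/2)\rfloor \geq 0$ closes the case. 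Your further split of $p>b$ into $p \in (b,s]$ and $p>s$ is harmless but unnecessary, since for all $p > b$ one simply has ${\rm val}_p(\tilde F^b_{s,r}) = {\rm val}_p(\tilde F_{s,r})$ and then invokes \cref{prop:foreadapt}.
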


\begin{proof}
Since the design is nontrivial, $v - k \geq 2 s + 1$ and $k \geq 2 s + 1$. Then, the factorization of $G_{s, r}$ in \cref{eq:quadrantlike} shows that $G_{s, r}$ is a positive rational number, hence so does $\tilde{F}^b_{s, r} \tilde{G}_{s, r}$ by \cref{eq:camara,eq:platformism}.

For primes $p \leq b$,
\begin{align*}
	& {\rm val}_p(\tilde{F}^b_{s, r} \tilde{G}_{s, r}) \\
	\geq & 2 \lfloor \log_p (v - s) \rfloor - {\rm val}_p({v - s \choose s}) - {\rm val}_p({v - s - r / 2 \choose s - r / 2}) & (\text{by \cref{eq:camara,eq:platformism}}) \\
	\geq & 2 \lfloor \log_p (v - s) \rfloor - \lfloor \log_p (v - s) \rfloor - \lfloor \log_p (v - s - r / 2) \rfloor & (\text{since ${\rm val}_p({n \choose m}) \leq \lfloor \log_p n \rfloor$}) \\
	\geq & 0.
\end{align*}
For primes $p \gt b$,
\begin{align*}
	{\rm val}_p(\tilde{F}^b_{s, r} \tilde{G}_{s, r})
	= & {\rm val}_p(\tilde{F}_{s, r} \tilde{G}_{s, r}) & (\text{by \cref{eq:platformism}}) \\
	= & {\rm val}_p(F_{s, r} G_{s, r}) & (\text{by \cref{eq:camara,eq:landscape}}) \\
	\geq & 0. & (\text{by \cref{prop:foreadapt}})
\end{align*}
Therefore, $\tilde{F}^b_{s, r} \tilde{G}_{s, r}$ is a positive integer.
\end{proof}

\subsection{An upper bound for $v$}
\label{sec:appointee}

Since $\tilde{F}^b_{s, r} \tilde{G}_{s, r}$ is a positive integer by \cref{prop:imi}, it follows that $\ln \tilde{F}^b_{s, r} + \ln \tilde{G}_{s, r} \geq 0$. Analyzing this inequality gives an upper bound for $v$.

\begin{theorem} \label{thm:persuasibility}
Let $s$ be a natural number, $r$ an even natural number with $6 \leq r \leq s$, and $b$ a real number with $b \leq s$. Let
\begin{equation}
\label{eq:brideknot}
	\psi^b_{s, r} := r - 4 - 4 \pi(b),
\end{equation}
where $\pi(x)$ is the prime counting function that counts the number of primes no greater than $x$. Let
\begin{equation}
\label{eq:Keraunia}
\begin{aligned}
	\kappa^b_{s, r} := & 2 \left( \ln \frac{(2 s - 3 r / 2 + 2)! s!}{(2 s - r + 2)! (s - r + 1)! (s - r / 2 + 1)} + (s - r) \ln 2 \right) \\
	& + 2 \left( \sum_{\text{prime $p \in (b, s]_{\Z}$}} {\rm val}_p\left(\tilde{F}_{s, r}\right) \ln p \right) + (4 \ln \frac{3}{2}) \pi(b).
\end{aligned}
\end{equation}
Suppose that $\psi^b_{s, r} \gt 0$. If a nontrivial tight $2 s$-$(v, k, \lambda)$ design exists, then
\[
	v - 2 s + 1 \leq \exp(\kappa^b_{s, r} / \psi^b_{s, r}).
\]
\end{theorem}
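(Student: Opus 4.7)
Plan.

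From \cref{prop:imi}, $\tilde F^{b}_{s,r}(v,k)\tilde G_{s,r}(v,k)$ is a positive integer whenever $(v,k)$ parameterizes a nontrivial tight $2s$-$(v,k,\lambda)$ design, so
\[
\ln\tilde F^{b}_{s,r}+\ln\tilde G_{s,r}(v,k)\ge 0.\tag{$*$}
\]
My plan is to bound each of the two logarithms from above by an explicit expression linear in $\ln(v-2s+1)$, so that $(*)$ becomes a linear inequality in $\ln(v-2s+1)$ that can be inverted to yield the exponential upper bound. Multiplying through by $2$ at the end converts the coefficients into $\psi^{b}_{s,r}$ and $\kappa^{b}_{s,r}$.

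For $\ln\tilde F^{b}_{s,r}$, I start from \cref{eq:platformism}. For each prime $p\le b$, the clean estimate $\lfloor\log_{p}(v-s)\rfloor\ln p\le\ln(v-s)$ bounds the small-prime contribution by $2\pi(b)\ln(v-s)$. Nontriviality forces $v\ge 4s+2$, so $v-s\le (3/2)(v-2s+1)$, giving $\ln(v-s)\le\ln(v-2s+1)+\ln(3/2)$; this is the source of both the $-4\pi(b)$ in $\psi^{b}_{s,r}$ and the $4\pi(b)\ln(3/2)$ summand in $\kappa^{b}_{s,r}$. The primes $b<p\le s$ contribute $\sum_{p\in(b,s]}\mathrm{val}_{p}(\tilde F_{s,r})\ln p$ verbatim, matching the corresponding sum in $\kappa^{b}_{s,r}$.

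For $\ln\tilde G_{s,r}$, I use the binomial form \cref{eq:camara}. I bound each numerator binomial by $\binom{n}{m}\le n^{m}/m!$ and each denominator binomial by $\binom{n}{m}\ge (v-2s+1)^{m}/m!$ (each factor in $\binom{v-s}{s}$ and $\binom{v-s-r/2}{s-r/2}$ is at least $v-2s+1$), which contributes a denominator factor $(v-2s+1)^{2s-r/2}/[s!(s-r/2)!]$. After the term-by-term majorizations $v-k-s+r/2-1\le v-k$ and $k-r/2,\,k-r\le k$, the $k$-dependent numerator reduces to $(v-k)^{r/2}k^{2s-3r/2+2}/[(r/2)!(s-r/2+1)!(s-r+1)!]$. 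The beta-function-type inequality $\max_{\alpha\in[0,1]}\alpha^{n}(1-\alpha)^{m}\le 1/\binom{n+m}{m}$, applied with $(n,m)=(2s-3r/2+2,r/2)$, supplies $(v-k)^{r/2}k^{2s-3r/2+2}\le v^{2s-r+2}/\binom{2s-r+2}{r/2}$, which is precisely what produces the factorial ratio $(2s-3r/2+2)!\,s!/[(2s-r+2)!(s-r+1)!(s-r/2+1)]$ in $\kappa^{b}_{s,r}$. I then write $v^{2s-r+2}/(v-2s+1)^{2s-r/2}=(v-2s+1)^{2-r/2}\bigl(v/(v-2s+1)\bigr)^{2s-r+2}$ and bound the last factor by a power of $2$; combined with the complementation assumption $k\le (v-1)/2$ (legitimate since the quantity $v-2s+1$ to be bounded depends only on $v$), this reduces the exponent of $2$ to $s-r$, yielding the $(s-r)\ln 2$ summand of $\kappa^{b}_{s,r}$.

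Substituting the two bounds into $(*)$, rearranging, and multiplying through by $2$ produces exactly $\psi^{b}_{s,r}\ln(v-2s+1)\le \kappa^{b}_{s,r}$; the hypothesis $\psi^{b}_{s,r}>0$ is precisely what permits dividing and exponentiating. The main technical obstacle is the careful bookkeeping in the estimate for $\tilde G_{s,r}$: the precise $(s-r)\ln 2$ term is sensitive to the exact interplay among (i) the complementation assumption $k\le (v-1)/2$, (ii) the beta-function bound on the $k$-dependent polynomial, and (iii) the conversion of $v$ to $v-2s+1$. Any other allocation of these three inequalities introduces spurious factors of $2$ that weaken the final bound on $v-2s+1$, so getting the right balance is the main calculational step; the remaining parts of the argument are routine.
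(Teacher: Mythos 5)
Your overall architecture (start from \cref{prop:imi}, take logarithms, bound the small-prime part of $\ln\tilde F^b_{s,r}$ by $2\pi(b)\ln(v-s)$, and convert $\ln(v-s)$ to $\ln(v-2s+1)+\ln\frac{3}{2}$) coincides with the paper's, and those parts are fine. The gap is in your estimate of $\tilde G_{s,r}$. Bounding the numerator binomials of \cref{eq:camara} by $n^m/m!$, the denominator binomials below by $(v-2s+1)^m/m!$, and applying $\alpha^n(1-\alpha)^m\le 1/\binom{n+m}{m}$, you arrive (correctly) at
\[
\tilde G_{s,r}\ \le\ \frac{(2s-3r/2+2)!\,s!}{(2s-r+2)!\,(s-r+1)!\,(s-r/2+1)}\cdot\frac{v^{2s-r+2}}{(v-2s+1)^{2s-r/2}},
\]
and to reach $\kappa^b_{s,r}$ you then need $\bigl(v/(v-2s+1)\bigr)^{2s-r+2}\le 2^{s-r}$. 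This is false: for $r=s$ it reads $\bigl(v/(v-2s+1)\bigr)^{s+2}\le 1$, and the honest bound from $v\ge 4s+2$ is $2^{2s-r+2}$, overshooting the required $2^{s-r}$ by $2^{s+2}$. The complementation assumption $k\le(v-1)/2$ cannot repair this, because the offending factor does not involve $k$; moreover that assumption is not a hypothesis of the theorem, and in the proof of \cref{thm:myxocyte} the theorem is invoked under the opposite normalization $v\le 2k$. If instead you push $k\le v/2$ into the beta-function step, the maximum of $\alpha^{2s-3r/2+2}(1-\alpha)^{r/2}$ over $\alpha\le 1/2$ is $2^{-(2s-r+2)}$ attained at $\alpha=1/2$, and you then forfeit the factor $1/\binom{2s-r+2}{r/2}$ that produces the factorial ratio in $\kappa^b_{s,r}$; either allocation fails to close the bookkeeping.

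The paper avoids this loss by never separating numerator and denominator into pure powers of $v$ and $v-2s+1$. After the discrete analogue ${a+b\choose a}x^{\underline{a}}y^{\underline{b}}\le(x+y)^{\underline{a+b}}$ of your beta inequality, the numerator becomes $(v-r)^{\underline{2s-r+2}}$, whose factors contain all $s$ factors of $(v-s)^{\underline{s}}$ in the denominator; cancelling these exactly leaves $(v-2s)^{\underline{2}}(v-r)^{\underline{s-r}}$, of degree only $s-r+2$ in $v$, and only then is $v\le 2(v-2s-1)$ applied --- to the exponent $s-r$, which is precisely the source of the term $2(s-r)\ln 2$ in $\kappa^b_{s,r}$. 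To fix your argument you must reinstate this exact cancellation (or else enlarge $\kappa^b_{s,r}$ by roughly $2(s+2)\ln 2$, which changes the statement and the explicit numerical case analysis downstream).
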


\begin{proof}
By \cref{prop:imi}, $\tilde{F}^b_{s, r} \tilde{G}_{s, r}$ is a positive integer, hence
\begin{equation} \label{eq:sleeting}
	0 \leq 2 \ln \left( \tilde{F}^b_{s, r} \tilde{G}_{s, r} \right) = 2 \ln \frac{\tilde{G}_{s, r}}{G_{s, r}} + 2 \ln \tilde{F}^b_{s, r} + 2 \ln G_{s, r}.
\end{equation}

We use the properties of the falling factorials and the rising factorials to estimate $G_{s, r}$ as follows.
\begin{align*}
	G_{s, r}
= & \frac{ (v - k - s)^{\overline{r / 2}} (k - s)^{\overline{s - r / 2 + 1}} (k - s)^{\overline{s - r + 1}} }{ (v - 2 s + 1)^{\overline{s}} (v - 2 s + 1)^{\overline{s - r / 2}} } & (\text{by \cref{eq:quadrantlike}}) \\
\leq & \frac{ (v - k - s)^{\overline{r / 2}} (k - s)^{\overline{2 s - 3 r / 2 + 2}} }{ (v - 2 s + 1)^{\overline{s}} (v - 2 s + 1)^{\overline{s - r / 2}} } & \text{(since $x^{\overline{a}} x^{\overline{b}} \leq x^{\overline{a + b}}$)} \\
= & \frac{ (v - k - s + r / 2 - 1)^{\underline{r / 2}} (k + s - 3 r / 2 + 1)^{\underline{2 s - 3 r / 2 + 2}} }{ (v - s)^{\underline{s}} (v - 2 s + 1)^{\overline{s - r / 2}} } & \text{(since $x^{\overline{a}} = (x + a - 1)^{\underline{a}}$)} \\
\leq & \frac{1}{ {2 s - r + 2 \choose r / 2} } \frac{ (v - r)^{\underline{2 s - r + 2}} }{ (v - s)^{\underline{s}} (v - 2 s + 1)^{\overline{s - r / 2}} } & \text{(since ${a + b \choose a} x^{\underline{a}} y^{\underline{b}} \leq (x + y)^{\underline{a + b}}$)} \\
= & \frac{1}{ {2 s - r + 2 \choose r / 2} } \frac{ (v - 2 s)^{\underline{2}} (v - r)^{\underline{s - r}} }{ (v - 2 s + 1)^{\overline{s - r / 2}} } \\
\leq & \frac{1}{ {2 s - r + 2 \choose r / 2} } \frac{v^{s - r}}{ (v - 2 s + 1)^{s - r / 2 - 2}} & \text{(since $x^{\underline{a}} \leq x^a \leq x^{\overline{a}}$)} \\
\leq & \frac{2^{s - r}}{ {2 s - r + 2 \choose r / 2} } \frac{1}{ (v - 2 s + 1)^{r / 2 - 2}}. & \text{(since $v \leq 2 (v - 2 s - 1)$)}
\end{align*}
Taking logarithm on both sides gives the following estimate on $2 \ln G_{s, r}$.
\begin{equation} \label{eq:unslumbering}
	2 \ln G_{s, r} \leq - (r - 4) \ln (v - 2 s + 1) - 2 \ln {2 s - r + 2 \choose r / 2}  + 2 (s - r) \ln 2.
\end{equation}

Substituting \cref{eq:platformism,eq:unslumbering} into \cref{eq:sleeting},
\begin{align*}
	0
	\leq & 2 \ln \frac{\tilde{G}_{s, r}}{G_{s, r}} + 4 \pi(b) \ln (v - s) + 2 \left(\sum_{\text{prime $p \in (b, s]_{\Z}$}} {\rm val}_p\left(\tilde{F}_{s, r}\right) \ln p \right) \\
	& - (r - 4) \ln (v - 2 s + 1) - 2 \ln {2 s - r + 2 \choose r / 2}  + 2 (s - r) \ln 2 \\
	= & - (r - 4) \ln (v - 2 s + 1) + 4 \pi(b) (\ln (v - s) - \ln \frac{3}{2}) + \kappa^b_{s, r} & (\text{by \cref{eq:camara,eq:Keraunia}}) \\
	\leq & - (r - 4) \ln (v - 2 s + 1) + 4 \pi(b) \ln (v - 2 s + 1) + \kappa^b_{s, r} & (\text{since $v \geq 2 (2 s + 1)$ and $6 \leq r \leq s$}) \\
	= & - \psi^b_{s, r} \ln (v - 2 s + 1) + \kappa^b_{s, r}. & (\text{by \cref{eq:brideknot}})
\end{align*}
Since $\psi^b_{s, r} \gt 0$, we have $\ln (v - 2 s + 1) \leq \kappa^b_{s, r} / \psi^b_{s, r}$, from which the result follows.
\end{proof}

\subsection{Explicit upper bound for $v$ when $s \geq 627$}
\label{sec:babyfied}

From \cref{thm:persuasibility}, it is easy to get an asymptotic upper bound for $v$. Choose $b := s$ and $r := 2 \lfloor s / 2 \rfloor$. As $s \to \infty$, $\psi^b_{s, r} \asymp s$ and $\kappa^b_{s, r} \asymp s \ln s$. Therefore, $v \lt \exp(\kappa^b_{s, r} / \psi^b_{s, r}) + 2 s \ll s$ if there exists a nontrivial tight $2s$-$(v, k, \lambda)$ design. We provide an explicit version of this asymptotic bound in \cref{prop:premeditation}.

\begin{proposition}
\label{prop:premeditation}
Let $b := s$ and $r := 2 \lfloor s / 2 \rfloor$. If $s \geq 627$, then $\psi^b_{s, r} \gt 0$ and
\[
	\exp(\kappa^b_{s, r} / \psi^b_{s, r}) \lt 2,000,000 s.
\]
\end{proposition}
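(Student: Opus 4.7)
The plan is to substitute $b = s$ and $r = 2 \lfloor s/2 \rfloor$ into \cref{eq:brideknot,eq:Keraunia} and bound each surviving piece by explicit analytic estimates. Since $b = s$, the sum over primes in $(b, s]_{\Z}$ in \cref{eq:Keraunia} is empty and contributes nothing to $\kappa^b_{s, r}$. The next step is to split on the parity of $s$: when $s$ is even, $r = s$ and $s - r = 0$, so the factorial ratio in \cref{eq:Keraunia} simplifies via cancellation to $\frac{(s/2 + 2)(s/2)!}{(s+1)(s+2)}$; when $s$ is odd, $r = s - 1$ and $s - r = 1$, and an analogous cancellation produces $\frac{((s+7)/2)!}{(s+1)(s+2)(s+3)^2}$.

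Applying Stirling with an explicit remainder (e.g.\ $n \ln n - n + \tfrac{1}{2}\ln(2\pi n) \leq \ln n! \leq n \ln n - n + \tfrac{1}{2}\ln(2\pi n) + \tfrac{1}{12n}$) to each of these closed forms yields, in either parity,
\[
 \kappa^s_{s, r} \leq s \ln s - (1 + \ln 2)\,s - \ln s + (4 \ln \tfrac{3}{2})\, \pi(s) + C_1
\]
for an explicit constant $C_1$. I would then invoke explicit bounds on $\pi(x)$: the Rosser--Schoenfeld inequality $\pi(x) < 1.25506\, x/\ln x$ for $x \geq 17$ gives $\psi^s_{s, r} = r - 4 - 4 \pi(s) \geq s - 5 - 5.03\, s/\ln s$, which is positive for $s \geq 627$ since $\ln 627 > 6.44$ forces $5.03/\ln s < 0.78$. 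Dividing and using the same bound on $\pi(s)$ in the numerator yields an explicit inequality of the form $\kappa^s_{s, r}/\psi^s_{s, r} \leq \ln s + C_2 + C_3/\ln s$ valid for $s \geq 627$. Exponentiating produces $\exp(\kappa^s_{s, r}/\psi^s_{s, r}) \leq s \cdot \exp(C_2 + C_3/\ln 627)$, and it remains to check $C_2 + C_3/\ln 627 < \ln(2 \cdot 10^6) \approx 14.51$.

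The main obstacle will be the tightness of constants near $s = 627$, where $\psi^s_{s, r}$ is only about $0.22\, s$, so any slack in the Stirling or prime-counting bounds gets amplified in the quotient. I would close this gap in one of two ways: either (a) prove a monotonicity statement, such as that $s \mapsto \kappa^s_{s, r}/\psi^s_{s, r} - \ln s$ is eventually decreasing, and verify the inequality at a few base cases near $s = 627$ by direct computation; or (b) split $s \geq 627$ into an initial segment $627 \leq s \leq 5393$ handled by direct numerical enumeration of $\pi(s)$ and a tail $s \geq 5393$ handled by Dusart's sharper bound $\pi(x) \leq x/(\ln x - 1)$. The asymptotic calculation gives $C_2 \to 3 - \ln 2 \approx 2.31$, so the exponent is asymptotically near $2.31$, leaving enormous headroom below the threshold $14.51$; this makes it very plausible that the explicit version goes through cleanly, and suggests the actual work is bookkeeping rather than any serious inequality chase.
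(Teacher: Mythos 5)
Your proposal follows essentially the same route as the paper: substitute $b = s$ so the prime sum over $(b,s]_{\Z}$ vanishes, specialize the factorial ratio at $r = 2\lfloor s/2\rfloor$, apply an explicit Stirling bound, control $\pi(s)$ by an explicit prime-counting inequality, and compare $\kappa^s_{s,r}/\psi^s_{s,r}$ to $\ln s + \ln(2\cdot 10^6)$. The paper uses Dusart's bound $\pi(x) \leq \frac{x}{\ln x}(1 + \frac{1.2762}{\ln x})$ where you invoke Rosser--Schoenfeld, and the paper handles both parities of $s$ in one shot by bounding each factor over $r \in \{s-1, s\}$ term by term (which is why the non-integer argument $s/2 + 7/2$ appears there) whereas you split by parity; these are cosmetic differences. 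Your worry about tightness near $s = 627$ and the two contingency plans are unnecessary: the paper's direct calculation with Dusart's bound gives $\kappa^s_{s,r} < (\ln s - \tfrac{5}{4})s$ for $s \geq 186$ and $\psi^s_{s,r} > 0$ for $s \geq 172$, and the quotient inequality then holds for $s \geq 627$ without any monotonicity argument or range splitting --- as your own asymptotic $C_2 \to 3 - \ln 2 \approx 2.31 \ll 14.51$ already suggests. (Minor slip: your odd-$s$ simplification of the factorial ratio is off by a factor of $2$; the $2\ln 2$ coming from $(s-r)\ln 2$ cancels the $2!$ in the denominator, leaving $\frac{((s+7)/2)!}{(s+1)(s+2)(s+3)\cdot\frac{s+3}{2}} = \frac{2((s+7)/2)!}{(s+1)(s+2)(s+3)^2}$, not $\frac{((s+7)/2)!}{(s+1)(s+2)(s+3)^2}$. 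This does not affect the argument.)
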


\begin{proof}
Dusart gave in \cite[Corollary 5.2]{Dusart2018} an explicit upper bound on the prime counting function,
\begin{equation} \label{eq:fortress}
	\pi(x) \leq \frac{x}{\ln x} \left(1 + \frac{1.2762}{\ln x}\right).
\end{equation}
With this, we estimate $\psi^b_{s, r}$.
\begin{align*}
\psi^b_{s, r}
\geq & s - 5 - 4 \pi(s) & (\text{by \cref{eq:brideknot} and $r \in \{s - 1, s\}$}) \\
\gt & s - 5 - 4 \frac{s}{\ln s} \left(1 + \frac{1.2762}{\ln s}\right) & (\text{by \cref{eq:fortress}}) \\
\gt & 0. & (\text{since $s \geq 172$})
\end{align*}

The Stirling's formula says that for $n \geq 1$,
\begin{equation} \label{eq:ornithoid}
	f(n) \lt \ln n! \leq f(n) + 1.
\end{equation}
where $f(n) := n \ln n - n + \frac{1}{2} \ln n$. With this and Dusart's bound, we estimate $\kappa^b_{s, r}$.
\begin{align*}
	\kappa^b_{s, r} \leq & 2 \ln (s / 2 + 7 / 2)! + 2 \ln s! - 2 \ln (s + 2)! \\
	& - 2 \ln (s / 2 + 1) + 4 (\ln \frac{3}{2}) \pi(s) & (\text{by \cref{eq:Keraunia} and $r \in \{s - 1, s\}$}) \\
	\leq & 2 f(s / 2 + 7 / 2) + 2 f(s) - 2 f(s + 2) + 4 \\
	& - 2 \ln (s / 2 + 1) + 4 (\ln \frac{3}{2}) \frac{s}{\ln s} \left(1 + \frac{1.2762}{\ln s}\right) & (\text{by \cref{eq:fortress,eq:ornithoid}}) \\
	\lt & \left(\ln s - \frac{5}{4}\right) s. & (\text{since $s \geq 186$})
\end{align*}

Therefore, combining the estimates of $\psi^b_{s, r}$ and $\kappa^b_{s, r}$,
\begin{align*}
& \kappa^b_{s, r} / \psi^s_{s, r} - \ln s \\
\leq & \left(\left(\ln s - \frac{5}{4}\right) s\right) \left(s - 5 - 4 \frac{s}{\ln s} \left(1 + \frac{1.2762}{\ln s}\right)\right)^{-1} - \ln s \\
\lt & \ln 2,000,000, & (\text{since $s \geq 627$})
\end{align*}
from which the result follows.
\end{proof}

\section{Lower bounds for $v$}
\label{sec:gaffer}

For a nontrivial tight $2s$-$(v, k, \lambda)$ design, the parameter $\lambda$ is just $\lambda_{s, s}$ in \cref{eq:Bee}. Moreover,
\begin{equation}
\label{eq:impersuasibly}
	s! \lambda_{s, s} = \frac{k \cdots (k - 2 s + 1)}{(v - s) \cdots (v - 2 s + 1)}
\end{equation}
takes integral value at parameters of nontrivial tight designs. This means that the denominator of the right side should not contain any prime factor larger than $k$. It indicates that we can use number theoretic properties to give a lower bound for $v$.

\begin{theorem}
\label{thm:cricoid}
Let $\rho_s$ be the smallest positive integer $n$ such that there are no primes in $(n, n + s - 1]_{\Z}$. If there exists a nontrivial tight $2 s$-$(v, k, \lambda)$ design, then
\[
	v - 2 s \geq \rho_{s + 1}.
\]
\end{theorem}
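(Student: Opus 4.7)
The plan is to argue by contraposition: assuming a nontrivial tight $2s$-$(v, k, \lambda)$ design exists with $v - 2s < \rho_{s+1}$, I aim to derive a contradiction. The minimality in the definition of $\rho_{s+1}$ ensures that the value $n = v - 2s$ fails the prime-free property, so the interval $(v - 2s, v - s]_{\Z} = \{v-2s+1, \dots, v-s\}$ must contain at least one prime, call it $p$.

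The core step will invoke the integrality of $s!\lambda$ from \cref{eq:impersuasibly}, which equals $k^{\underline{2s}} / (v - s)^{\underline{s}}$. The denominator is a product of $s$ consecutive integers lying in $[v-2s+1, v-s]$, and $p$ is itself one of those integers; because $2p > v-s$ for any $v \geq 3s - 1$, the prime $p$ appears in the denominator exactly once. Integrality therefore forces $p$ to divide the numerator $k^{\underline{2s}} = k(k-1)\cdots(k-2s+1)$. Since $k \geq 2s+1$ by nontriviality, each factor $k-j$ is a positive integer, so the only way $p$ can divide one of them is if $p \leq k$.

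Complementation will then close the argument. When $v-k \geq 2s$, the complementary design $(v, v-k, \lambda')$ is again a nontrivial tight $2s$-design, so I may assume without loss of generality that $k \leq v/2$ (otherwise pass to the complement). For $v \geq 4s-1$ one has $v/2 < v - 2s + 1 \leq p$, and hence $k \leq v/2 < p$, contradicting $p \leq k$; this forces the initial assumption $v - 2s < \rho_{s+1}$ to fail, yielding the desired inequality $v - 2s \geq \rho_{s+1}$.

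The hard part will be handling the regime where complementation does not apply, namely when $v - k < 2s$ (equivalently $k > v - 2s$), since then the collection of $(v-k)$-subsets is too small to form a $2s$-design and the bound $p \leq k$ on its own does not produce a contradiction. Such exotic parameters force $v$ to be small, and together with the residual small-$v$ regime $v < 4s - 1$, these exceptional cases are to be swept up by the explicit bookkeeping in \cref{sec:malandrous}, so that the generic complementation argument above suffices to establish the stated bound.
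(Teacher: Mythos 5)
Your core idea is exactly the paper's: exploit the integrality of $s!\lambda_{s,s} = k^{\underline{2s}}/(v-s)^{\underline{s}}$ to show that no integer in $\{v-2s+1,\dots,v-s\}$ can be prime, since such a prime would divide the denominator but exceed every factor in the numerator. Where you correctly deduce that integrality forces any such prime $p$ to satisfy $p \leq k$, you then go astray trying to produce the complementary inequality $p > k$.

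The complementation detour is both unnecessary and, as you have left it, a genuine gap. For a nontrivial tight $2s$-design one has $k \geq 2s+1$ \emph{and} $v - k \geq 2s+1$ --- this is the standard fact that the paper invokes in the proof of \cref{prop:imi} and again in \cref{sec:malandrous}. From $v - k \geq 2s+1$ one immediately gets $v - 2s + 1 \geq k + 2 > k$, so every integer in $\{v-2s+1,\dots,v-s\}$, in particular any prime $p$ in that range, already exceeds $k$; no passage to the complement, no splitting on $v \gtrless 4s-1$, and no appeal to \cref{sec:malandrous} is needed. Your ``exceptional'' regimes $v - k < 2s$ and $v < 4s-1$ are in fact vacuous (both contradict $k, v-k \geq 2s+1$, which give $v \geq 4s+2$), but your write-up treats them as live possibilities to be ``swept up'' later, and \cref{sec:malandrous} does no such sweeping for this theorem --- \cref{thm:cricoid} must hold unconditionally because \cref{eq:electrothermometer} relies on it for every $s \geq 10$. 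Replace the complementation argument by the single inequality $v - k \geq 2s+1$ (citing the nontriviality convention) and the proof closes cleanly, matching the paper's.
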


\begin{proof}
As shown in \cref{eq:impersuasibly}, the numerator of $s! \lambda_{s, s}$ is the product of $2 s$ consecutive integers from $k - 2 s + 1$ to $k$, and the denominator of $s! \lambda$ is the product of $s$ consecutive integers from $v - 2 s + 1$ to $v - s$. The design is nontrivial, hence $v - 2 s + 1 \gt k$. Since $\lambda_{s, s}$ is an integer, neither of the $s$ consecutive integers from $v - 2 s + 1$ to $v - s$ could be a prime. Therefore, $v - 2 s \geq \rho_{s + 1}$.
\end{proof}

\begin{remark}
\label{rem:benzophenanthroline}
The number $\rho_s$ can be rewritten as
\[
	\rho_s = \min \{p_n \mid p_{n + 1} - p_n \geq s\},
\]
where $p_n$ is the $n$-th prime. Due to its connection with prime gap, the exact values of $\rho_s$ are known for $s \leq 1550$, which is the largest known maximal prime gap. In particular, $\rho_{288} = 1,294,268,491$.
\end{remark}

An upper bound on prime gap will give us a lower bound of $\rho_s$. We use one upper bound given by Dusart. Dusart's result is not asymptotically the best known one, but it is explicit and has very nice constants.

\begin{proposition} \label{prop:pithecomorphism}
For $s \geq 288$,
\[
	\rho_s \gt 5000 s (14.6 + \ln s)^2.
\]
In particular,
\[
	\rho_s \gt 2,000,000 s.
\]
\end{proposition}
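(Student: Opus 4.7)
The plan is to derive the bound from an explicit upper estimate on prime gaps due to Dusart, available in the reference \cite{Dusart2018} already used in the proof of \cref{prop:premeditation}. Dusart provides an explicit threshold $x_0$ and constant $c$ such that for every prime $p \geq x_0$, the next prime is less than $p + p/(c \ln^2 p)$. Feeding $p = \rho_s$ into this estimate, whose next prime is by definition at least $s$ away, yields the implicit inequality
\[
\rho_s \;\geq\; c\, s\, \ln^2 \rho_s,
\]
provided the hypothesis $\rho_s \geq x_0$ holds.

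That hypothesis is dispatched numerically: $\rho_s$ is nondecreasing in $s$ (a larger prime gap is a strictly stronger condition on $n$), and \cref{rem:benzophenanthroline} records the exact value $\rho_{288} = 1{,}294{,}268{,}491$, which will comfortably exceed Dusart's threshold $x_0$. With the implicit inequality in hand, I would invert it by a single bootstrap step: taking logarithms gives $\ln \rho_s \geq \ln(cs) + 2 \ln \ln \rho_s$, and using the crude lower bound on $\rho_s$ already provided by $\rho_{288}$ to absorb the $2\ln \ln \rho_s$ term into an additive constant yields an estimate of the form $\ln \rho_s \geq 14.6 + \ln s$ valid for all $s \geq 288$. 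Re-substituting this back into the implicit inequality then produces $\rho_s \geq c s (14.6 + \ln s)^2$, with $c = 5000$ selected to match Dusart's constant.

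The main obstacle is calibration of constants: one has to pick out the right Dusart estimate so that the constants $5000$ and $14.6$ close all the way down at $s = 288$ rather than at some larger threshold, and the bootstrap step must be executed carefully enough that the additive constant really comes out as $14.6$ and not something larger. Once that is done, the ``in particular'' clause requires no new input: for $s \geq 288$ one has $\ln s > 5.66$, hence $14.6 + \ln s > 20.26$ and $5000 (14.6 + \ln s)^2 > 5000 \cdot 20.26^2 > 2{,}000{,}000$, so $\rho_s > 5000 s (14.6 + \ln s)^2 > 2{,}000{,}000\, s$.
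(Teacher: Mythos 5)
Your proposal is correct and follows essentially the same route as the paper: both invoke Dusart's explicit prime-gap bound (threshold $468{,}991{,}632$, constant $5000$), derive $\rho_s \geq 5000\, s (\ln \rho_s)^2$, and then bootstrap using $\rho_s \geq \rho_{288}$ to replace $\ln \rho_s$ by $14.6 + \ln s$. The only cosmetic difference is that the paper writes the bootstrap as a single chain of substitutions inside $\ln(5000 s (\ln \rho_{288})^2)$ rather than taking logarithms explicitly as you do.
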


\begin{proof}
Dusart showed in \cite[Corollary 5.5]{Dusart2018} that for $x \geq 468,991,632$, there exists a prime $p$ in the interval
\[
	\left(x, x + \frac{x}{5000 (\ln x)^2}\right].
\]

Since $\rho_s \geq \rho_{288} \gt 468,991,632$ by \cref{rem:benzophenanthroline}, $\left(\rho_s, \rho_s + \frac{\rho_s}{5000 (\ln \rho_s)^2}\right]$ contains a prime. On the other hand, by definition, $\left(\rho_s, \rho_s + s - 1\right]$ contains no primes. Therefore,
\[
	\frac{\rho_s}{5000 (\ln \rho_s)^2} \geq s.
\]
Then,
\begin{align*}
	\rho_s \geq & 5000 s (\ln \rho_s)^2 \geq 5000 s (\ln (5000 s (\ln \rho_s)^2))^2 \geq 5000 s (\ln (5000 s (\ln \rho_{288})^2))^2 \\
	\gt & 5000 s (14.6 + \ln s)^2 \gt 2,000,000 s. \qedhere
\end{align*}
\end{proof}

\begin{remark} \label{rem:osteomere}
The result by Baker-Harman-Pintz \cite{BakerHarmanPintz2001} on prime gaps in 2001 leads to $\rho_s \gg s^{40 / 21}$. A conditional result by Cram\'er \cite{Cramer1936} in 1936 leads to $\rho_s \gg s^2 / (\ln s)^2$ assuming Riemann hypothesis. Cram\'er's conjecture \cite{Cramer1936} suggests that we may have $\ln \rho_s \gg \sqrt{s}$.
\end{remark}

\section{Proof of the main result}
\label{sec:malandrous}

\cref{sec:ineffaceability} gives upper bounds for $v$ and \cref{sec:gaffer} gives lower bounds for $v$. Now, we have all the ingredients needed to prove the main result \cref{thm:myxocyte}.

\begin{proof}[Proof of \cref{thm:myxocyte}]
Through a series of previous works \cite{Bremner1979,Peterson1977,Xiang2018,DukesShortGershman2013}, it has been proved that the only nontrivial tight $2s$-designs with $s \in [2, 9]_{\Z}$ are the Witt $4$-$(23, 7, 1)$ design and its complement the Witt $4$-$(23, 16, 52)$ design. From now on we assume that $s \geq 10$, and that there exists a nontrivial tight $2 s$-$(v, k, \lambda)$ design.

Since the design is nontrivial, $k \geq 2 s + 1$ and $v - k \geq 2 s + 1$. Its complementary design is a nontrivial tight $2 s$-$(v, v - k, \overline{\lambda})$ design for some parameter $\overline{\lambda}$. Since the complementary design has the same $s$ and $v$ as the original $2 s$-$(v, k, \lambda)$ design, we may assume without loss of generality that $v \leq 2 k$.

Choose $r := 2 \lfloor s / 2 \rfloor$. Combining \cref{thm:persuasibility} and \cref{thm:cricoid},
\begin{equation} \label{eq:electrothermometer}
	\rho_{s + 1} \leq v - 2 s \lt \exp(\kappa^b_{s, r} / \psi^b_{s, r}),
\end{equation}
whenever $\psi^b_{s, r} \gt 0$ for some $b \leq s$.

We divide the situation into three cases depending on the size of $s$, and show contradictions in each case.

\smallskip

\noindent {\bf Case 1}: $s \geq 627$.

Choose $b := s$. \cref{prop:premeditation} shows $\psi^b_{s, r} \gt 0$ and gives an upper bound $\exp(\kappa^b_{s, r} / \psi^b_{s, r}) \lt 2,000,000 s$. On the other hand, \cref{prop:pithecomorphism} gives a lower bound $\rho_s \gt 2,000,000 s$. These two bounds contradict with \cref{eq:electrothermometer}.

\smallskip

\noindent {\bf Case 2}: $s \in [288, 626]_{\Z}$.

Choose $b := s / 3$. Direct calculations show that $\psi^b_{s, r} \gt 0$ and $\exp(\kappa^b_{s, r} / \psi^b_{s, r}) \lt 1,000,000,000$. On the other hand, $\rho_{s + 1} \geq \rho_{288} \gt 1,000,000,000$ by \cref{rem:benzophenanthroline}. These two bounds contradict with \cref{eq:electrothermometer}.

\smallskip

\noindent {\bf Case 3}: $s \in [10, 287]_{\Z}$.

For each $s \in [10, 287]_{\Z}$, calculations over $b \in [1, s]_{\Z}$ shows that we can choose a suitable $b$ such that $\psi^b_{s, r} \gt 0$ and $\exp(\kappa^b_{s, r} / \psi^b_{s, r}) \lt 15,000,000,000$. Therefore, by \cref{eq:electrothermometer}, $v - 2 s \leq 15,000,000,000$.

Let $x := k - s$ and $y := v - 2 s + 1$. Then, $\alpha_{s, i}$ in \cref{eq:Bass} can be rewritten as
\begin{equation} \label{eq:impairment}
\alpha_{s, i} = {s \choose i} \frac{x^{\overline{i}}(x + 1)^{\overline{i}}}{y^{\overline{i}}}.
\end{equation}
The $\alpha_{s, i}$'s are integers by \cite[Corollary 2.2]{Xiang2018}. Moreover, $y \in [x + s + 2, 2 x + 1]_{\Z}$ and $x \lt k \leq v - 2 s - 1 \leq 15,000,000,000$.

However, computer search\footnote{The source code and the output of the program used can be found on the author's website: \href{https://ziqing.org/tight-block-design}{https://ziqing.org/tight-block-design}.} shows that there are no integer triples $(s, x, y)$ with $s \in [10, 287]_{\Z}$, $x \in [1, 15,000,000,000]_{\Z}$ and $y \in [x + s + 2, 2 x + 1]_{\Z}$ such that $\alpha_{s, 1}, \dots, \alpha_{s, 6}$ in \cref{eq:impairment} are all integers.
\end{proof}

\printbibliography

\end{document}